\newtheorem{theorem}{Theorem}
\theoremstyle{plain}
\newtheorem{acknowledgement}{Acknowledgement}
\newtheorem{axiom}{Axiom}
\newtheorem{conjecture}{Conjecture}
\newtheorem{corollary}{Corollary}
\newtheorem{definition}{Definition}
\newtheorem{example}{Example}
\newtheorem{exercise}{Exercise}
\newtheorem{lemma}{Lemma}
\newtheorem{proposition}{Proposition}
\newtheorem{remark}{Remark}
\numberwithin{equation}{section}
\chardef\@x10\chardef\@xv60
\def\tcitime{
\def\@time{%
  \@minute\time\@hour\@minute\divide\@hour\@xv
  \ifnum\@hour<\@x 0\fi\the\@hour:%
  \multiply\@hour\@xv\advance\@minute-\@hour
  \ifnum\@minute<\@x 0\fi\the\@minute
  }}%
\def\QCTOpt[#1]#2{%
  \def\QCTOptB{#1}
  \def\QCTOptA{#2}
}
\def\QCTNOpt#1{%
  \def\QCTOptA{#1}
  \let\QCTOptB\empty
}
\def\Qct{%
  \@ifnextchar[{%
    \QCTOpt}{\QCTNOpt}
}
\def\QCBOpt[#1]#2{%
  \def\QCBOptB{#1}
  \def\QCBOptA{#2}
}
\def\QCBNOpt#1{%
  \def\QCBOptA{#1}
  \let\QCBOptB\empty
}
\def\Qcb{%
  \@ifnextchar[{%
    \QCBOpt}{\QCBNOpt}
}
\def\PrepCapArgs{%
  \ifx\QCBOptA\empty
    \ifx\QCTOptA\empty
      {}%
    \else
      \ifx\QCTOptB\empty
        {\QCTOptA}%
      \else
        [\QCTOptB]{\QCTOptA}%
      \fi
    \fi
  \else
    \ifx\QCBOptA\empty
      {}%
    \else
      \ifx\QCBOptB\empty
        {\QCBOptA}%
      \else
        [\QCBOptB]{\QCBOptA}%
      \fi
    \fi
  \fi
}
\def\GRAPHICSPS#1{%
 \ifcase\GRAPHICSTYPE
   \special{ps: #1}%
 \or
   \special{language "PS", include "#1"}%
 \fi
}%
\def\graffile#1#2#3#4{%
    \leavevmode
    \raise -#4 \BOXTHEFRAME{%
        \hbox to #2{\raise #3\hbox to #2{\null #1\hfil}}}%
}%
\def\draftbox#1#2#3#4{%
 \leavevmode\raise -#4 \hbox{%
  \frame{\rlap{\protect\tiny #1}\hbox to #2%
   {\vrule height#3 width\z@ depth\z@\hfil}%
  }%
 }%
}%
\newif\ifwasdraft
\def\GRAPHIC#1#2#3#4#5{%
 \ifnum\draft=\@ne\draftbox{#2}{#3}{#4}{#5}%
  \else\graffile{#1}{#3}{#4}{#5}%
  \fi
 }%
\def\addtoLaTeXparams#1{%
    \edef\LaTeXparams{\LaTeXparams #1}}%
\newif\ifBoxFrame \BoxFramefalse
\newif\ifOverFrame \OverFramefalse
\newif\ifUnderFrame \UnderFramefalse
\def\BOXTHEFRAME#1{%
   \hbox{%
      \ifBoxFrame
         \frame{#1}%
      \else
         {#1}%
      \fi
   }%
}
\def\doFRAMEparams#1{\BoxFramefalse\OverFramefalse\UnderFramefalse\readFRAMEparams#1\end}%
\def\readFRAMEparams#1{%
 \ifx#1\end%
  \let\next=\relax
  \else
  \ifx#1i\dispkind=\z@\fi
  \ifx#1d\dispkind=\@ne\fi
  \ifx#1f\dispkind=\tw@\fi
  \ifx#1t\addtoLaTeXparams{t}\fi
  \ifx#1b\addtoLaTeXparams{b}\fi
  \ifx#1p\addtoLaTeXparams{p}\fi
  \ifx#1h\addtoLaTeXparams{h}\fi
  \ifx#1X\BoxFrametrue\fi
  \ifx#1O\OverFrametrue\fi
  \ifx#1U\UnderFrametrue\fi
  \ifx#1w
    \ifnum\draft=1\wasdrafttrue\else\wasdraftfalse\fi
    \draft=\@ne
  \fi
  \let\next=\readFRAMEparams
  \fi
 \next
 }%
\def\IFRAME#1#2#3#4#5#6{%
      \bgroup
      \let\QCTOptA\empty
      \let\QCTOptB\empty
      \let\QCBOptA\empty
      \let\QCBOptB\empty
      #6%
      \parindent=0pt%
      \leftskip=0pt
      \rightskip=0pt
      \setbox0 = \hbox{\QCBOptA}%
      \@tempdima = #1\relax
      \ifOverFrame
          \typeout{This is not implemented yet}%
          \show\HELP
      \else
         \ifdim\wd0>\@tempdima
            \advance\@tempdima by \@tempdima
            \ifdim\wd0 >\@tempdima
               \textwidth=\@tempdima
               \setbox1 =\vbox{%
                  \noindent\hbox to \@tempdima{\hfill\GRAPHIC{#5}{#4}{#1}{#2}{#3}\hfill}\\%
                  \noindent\hbox to \@tempdima{\parbox[b]{\@tempdima}{\QCBOptA}}%
               }%
               \wd1=\@tempdima
            \else
               \textwidth=\wd0
               \setbox1 =\vbox{%
                 \noindent\hbox to \wd0{\hfill\GRAPHIC{#5}{#4}{#1}{#2}{#3}\hfill}\\%
                 \noindent\hbox{\QCBOptA}%
               }%
               \wd1=\wd0
            \fi
         \else
            \ifdim\wd0>0pt
              \hsize=\@tempdima
              \setbox1 =\vbox{%
                \unskip\GRAPHIC{#5}{#4}{#1}{#2}{0pt}%
                \break
                \unskip\hbox to \@tempdima{\hfill \QCBOptA\hfill}%
              }%
              \wd1=\@tempdima
           \else
              \hsize=\@tempdima
              \setbox1 =\vbox{%
                \unskip\GRAPHIC{#5}{#4}{#1}{#2}{0pt}%
              }%
              \wd1=\@tempdima
           \fi
         \fi
         \@tempdimb=\ht1
         \advance\@tempdimb by \dp1
         \advance\@tempdimb by -#2%
         \advance\@tempdimb by #3%
         \leavevmode
         \raise -\@tempdimb \hbox{\box1}%
      \fi
      \egroup%
}%
\def\DFRAME#1#2#3#4#5{%
 \begin{center}
     \let\QCTOptA\empty
     \let\QCTOptB\empty
     \let\QCBOptA\empty
     \let\QCBOptB\empty
     \ifOverFrame 
        #5\QCTOptA\par
     \fi
     \GRAPHIC{#4}{#3}{#1}{#2}{\z@}
     \ifUnderFrame 
        \nobreak\par #5\QCBOptA
     \fi
 \end{center}%
 }%
\def\FFRAME#1#2#3#4#5#6#7{%
 \begin{figure}[#1]%
  \let\QCTOptA\empty
  \let\QCTOptB\empty
  \let\QCBOptA\empty
  \let\QCBOptB\empty
  \ifOverFrame
    #4
    \ifx\QCTOptA\empty
    \else
      \ifx\QCTOptB\empty
        \caption{\QCTOptA}%
      \else
        \caption[\QCTOptB]{\QCTOptA}%
      \fi
    \fi
    \ifUnderFrame\else
      \label{#5}%
    \fi
  \else
    \UnderFrametrue%
  \fi
  \begin{center}\GRAPHIC{#7}{#6}{#2}{#3}{\z@}\end{center}%
  \ifUnderFrame
    #4
    \ifx\QCBOptA\empty
      \caption{}%
    \else
      \ifx\QCBOptB\empty
        \caption{\QCBOptA}%
      \else
        \caption[\QCBOptB]{\QCBOptA}%
      \fi
    \fi
    \label{#5}%
  \fi
  \end{figure}%
 }%
\def\makeactives{
  \catcode`\"=\active
  \catcode`\;=\active
  \catcode`\:=\active
  \catcode`\'=\active
  \catcode`\~=\active
}
   \gdef\activesoff{%
      \def"{\string"}
      \def;{\string;}
      \def:{\string:}
      \def'{\string'}
      \def~{\string~}
    }
\def\FRAME#1#2#3#4#5#6#7#8{%
 \bgroup
 \@ifundefined{bbl@deactivate}{}{\activesoff}
 \ifnum\draft=\@ne
   \wasdrafttrue
 \else
   \wasdraftfalse%
 \fi
 \def\LaTeXparams{}%
 \dispkind=\z@
 \def\LaTeXparams{}%
 \doFRAMEparams{#1}%
 \ifnum\dispkind=\z@\IFRAME{#2}{#3}{#4}{#7}{#8}{#5}\else
  \ifnum\dispkind=\@ne\DFRAME{#2}{#3}{#7}{#8}{#5}\else
   \ifnum\dispkind=\tw@
    \edef\@tempa{\noexpand\FFRAME{\LaTeXparams}}%
    \@tempa{#2}{#3}{#5}{#6}{#7}{#8}%
    \fi
   \fi
  \fi
  \ifwasdraft\draft=1\else\draft=0\fi{}%
  \egroup
 }%
\def\TEXUX#1{"texux"}
\def\limfunc#1{\mathop{\rm #1}}%
\long\def\QQQ#1#2{%
     \long\expandafter\def\csname#1\endcsname{#2}}%
\long\def\QQA#1#2{}%
\def\QTR#1#2{{\csname#1\endcsname #2}}
\def\EXPAND#1[#2]#3{}%
\def\NOEXPAND#1[#2]#3{}%
\def\LaTeXparent#1{}%
\def\ChildStyles#1{}%
\def\ChildDefaults#1{}%
\def\QTagDef#1#2#3{}%
\def\QQfnmark#1{\footnotemark}
\def\makeatletter\input gnuindex.sty\makeatother\makeindex{\makeatletter\input gnuindex.sty\makeatother\makeindex}%
\def\initial#1{\bigbreak{\raggedright\large\bf #1}\kern 2\p@\penalty3000}}%
 \def\abstract{%
  \if@twocolumn
   \section*{Abstract (Not appropriate in this style!)}%
   \else \small 
   \begin{center}{\bf Abstract\vspace{-.5em}\vspace{\z@}}\end{center}%
   \quotation 
   \fi
  }%
   \def\registered{\relax\ifmmode{}\r@gistered
                    \else$\m@th\r@gistered$\fi}%
 \def\r@gistered{^{\ooalign
  {\hfil\raise.07ex\hbox{$\scriptstyle\rm\text{R}$}\hfil\crcr
  \mathhexbox20D}}}}{}%
\newdimen\theight
\def\Column{%
 \vadjust{\setbox\z@=\hbox{\scriptsize\quad\quad tcol}%
  \theight=\ht\z@\advance\theight by \dp\z@\advance\theight by \lineskip
  \kern -\theight \vbox to \theight{%
   \rightline{\rlap{\box\z@}}%
   \vss
   }%
  }%
 }%
\def\qed{%
 \ifhmode\unskip\nobreak\fi\ifmmode\ifinner\else\hskip5\p@\fi\fi
 \hbox{\hskip5\p@\vrule width4\p@ height6\p@ depth1.5\p@\hskip\p@}%
 }%
\def\miss{\hbox{\vrule height2\p@ width 2\p@ depth\z@}}%
\def\tcol#1{{\baselineskip=6\p@ \vcenter{#1}} \Column}  %
\def\newfmtname{LaTeX2e}
\def\chkcompat{%
   \if@compatibility
   \else
     \usepackage{latexsym}
   \fi
}
  \DeclareOldFontCommand{\rm}{\normalfont\rmfamily}{\mathrm}
  \DeclareOldFontCommand{\sf}{\normalfont\sffamily}{\mathsf}
  \DeclareOldFontCommand{\tt}{\normalfont\ttfamily}{\mathtt}
  \DeclareOldFontCommand{\bf}{\normalfont\bfseries}{\mathbf}
  \DeclareOldFontCommand{\it}{\normalfont\itshape}{\mathit}
  \DeclareOldFontCommand{\sl}{\normalfont\slshape}{\@nomath\sl}
  \DeclareOldFontCommand{\sc}{\normalfont\scshape}{\@nomath\sc}
\def\alpha{\Greekmath 010B }%
\def\beta{\Greekmath 010C }%
\def\gamma{\Greekmath 010D }%
\def\delta{\Greekmath 010E }%
\def\epsilon{\Greekmath 010F }%
\def\zeta{\Greekmath 0110 }%
\def\eta{\Greekmath 0111 }%
\def\theta{\Greekmath 0112 }%
\def\iota{\Greekmath 0113 }%
\def\kappa{\Greekmath 0114 }%
\def\lambda{\Greekmath 0115 }%
\def\mu{\Greekmath 0116 }%
\def\nu{\Greekmath 0117 }%
\def\xi{\Greekmath 0118 }%
\def\pi{\Greekmath 0119 }%
\def\rho{\Greekmath 011A }%
\def\sigma{\Greekmath 011B }%
\def\tau{\Greekmath 011C }%
\def\upsilon{\Greekmath 011D }%
\def\phi{\Greekmath 011E }%
\def\chi{\Greekmath 011F }%
\def\psi{\Greekmath 0120 }%
\def\omega{\Greekmath 0121 }%
\def\varepsilon{\Greekmath 0122 }%
\def\vartheta{\Greekmath 0123 }%
\def\varpi{\Greekmath 0124 }%
\def\varrho{\Greekmath 0125 }%
\def\varsigma{\Greekmath 0126 }%
\def\varphi{\Greekmath 0127 }%
\def\nabla{\Greekmath 0272 }
\def\FindBoldGroup{%
   {\setbox0=\hbox{$\mathbf{x\global\edef\theboldgroup{\the\mathgroup}}$}}%
}
\def\Greekmath#1#2#3#4{%
    \if@compatibility
        \ifnum\mathgroup=\symbold
           \mathchoice{\mbox{\boldmath$\displaystyle\mathchar"#1#2#3#4$}}%
                      {\mbox{\boldmath$\textstyle\mathchar"#1#2#3#4$}}%
                      {\mbox{\boldmath$\scriptstyle\mathchar"#1#2#3#4$}}%
                      {\mbox{\boldmath$\scriptscriptstyle\mathchar"#1#2#3#4$}}%
        \else
           \mathchar"#1#2#3#4%
        \fi 
    \else 
        \FindBoldGroup
        \ifnum\mathgroup=\theboldgroup 
           \mathchoice{\mbox{\boldmath$\displaystyle\mathchar"#1#2#3#4$}}%
                      {\mbox{\boldmath$\textstyle\mathchar"#1#2#3#4$}}%
                      {\mbox{\boldmath$\scriptstyle\mathchar"#1#2#3#4$}}%
                      {\mbox{\boldmath$\scriptscriptstyle\mathchar"#1#2#3#4$}}%
        \else
           \mathchar"#1#2#3#4%
        \fi     	    
	  \fi}
\newif\ifGreekBold  \GreekBoldfalse
\let\SAVEPBF=\pbf
\def\pbf{\GreekBoldtrue\SAVEPBF}%
  \newcounter{equationnumber}  
  \def\mathletters{%
     \addtocounter{equation}{1}
     \edef\@currentlabel{\theequation}%
     \setcounter{equationnumber}{\c@equation}
     \setcounter{equation}{0}%
     \edef\theequation{\@currentlabel\noexpand\alph{equation}}%
  }
    \def\BibTeX{{\rm B\kern-.05em{\sc i\kern-.025em b}\kern-.08em
                 T\kern-.1667em\lower.7ex\hbox{E}\kern-.125emX}}}{}%
\def\AmS{{\protect\usefont{OMS}{cmsy}{m}{n}%
                A\kern-.1667em\lower.5ex\hbox{M}\kern-.125emS}}}{}%
\let\DOTSI\relax
\def\RIfM@{\relax\ifmmode}%
\def\FN@{\futurelet\next}%
\def\iint{\DOTSI\intno@\tw@\FN@\ints@}%
\def\iiint{\DOTSI\intno@\thr@@\FN@\ints@}%
\def\iiiint{\DOTSI\intno@4 \FN@\ints@}%
\def\idotsint{\DOTSI\intno@\z@\FN@\ints@}%
\def\ints@{\findlimits@\ints@@}%
\newif\iflimtoken@
\newif\iflimits@
\def\findlimits@{\limtoken@true\ifx\next\limits\limits@true
 \else\ifx\next\nolimits\limits@false\else
 \limtoken@false\ifx\ilimits@\nolimits\limits@false\else
 \ifinner\limits@false\else\limits@true\fi\fi\fi\fi}%
\def\multint@{\int\ifnum\intno@=\z@\intdots@                          
 \else\intkern@\fi                                                    
 \ifnum\intno@>\tw@\int\intkern@\fi                                   
 \ifnum\intno@>\thr@@\int\intkern@\fi                                 
 \int}
\def\multintlimits@{\intop\ifnum\intno@=\z@\intdots@\else\intkern@\fi
 \ifnum\intno@>\tw@\intop\intkern@\fi
 \ifnum\intno@>\thr@@\intop\intkern@\fi\intop}%
\def\intic@{%
    \mathchoice{\hskip.5em}{\hskip.4em}{\hskip.4em}{\hskip.4em}}%
\def\negintic@{\mathchoice
 {\hskip-.5em}{\hskip-.4em}{\hskip-.4em}{\hskip-.4em}}%
\def\ints@@{\iflimtoken@                                              
 \def\ints@@@{\iflimits@\negintic@
   \mathop{\intic@\multintlimits@}\limits                             
  \else\multint@\nolimits\fi                                          
  \eat@}
 \else                                                                
 \def\ints@@@{\iflimits@\negintic@
  \mathop{\intic@\multintlimits@}\limits\else
  \multint@\nolimits\fi}\fi\ints@@@}%
\def\intkern@{\mathchoice{\!\!\!}{\!\!}{\!\!}{\!\!}}%
\def\plaincdots@{\mathinner{\cdotp\cdotp\cdotp}}%
\def\intdots@{\mathchoice{\plaincdots@}%
 {{\cdotp}\mkern1.5mu{\cdotp}\mkern1.5mu{\cdotp}}%
 {{\cdotp}\mkern1mu{\cdotp}\mkern1mu{\cdotp}}%
 {{\cdotp}\mkern1mu{\cdotp}\mkern1mu{\cdotp}}}%
\def\RIfM@{\relax\protect\ifmmode}
\def\text{\RIfM@\expandafter\text@\else\expandafter\mbox\fi}
\let\nfss@text\text
\def\text@#1{\mathchoice
   {\textdef@\displaystyle\f@size{#1}}%
   {\textdef@\textstyle\tf@size{\firstchoice@false #1}}%
   {\textdef@\textstyle\sf@size{\firstchoice@false #1}}%
   {\textdef@\textstyle \ssf@size{\firstchoice@false #1}}%
   \glb@settings}
\def\textdef@#1#2#3{\hbox{{%
                    \everymath{#1}%
                    \let\f@size#2\selectfont
                    #3}}}
\newif\iffirstchoice@
\def\Let@{\relax\iffalse{\fi\let\\=\cr\iffalse}\fi}%
\def\vspace@{\def\vspace##1{\crcr\noalign{\vskip##1\relax}}}%
\def\multilimits@{\bgroup\vspace@\Let@
 \baselineskip\fontdimen10 \scriptfont\tw@
 \advance\baselineskip\fontdimen12 \scriptfont\tw@
 \lineskip\thr@@\fontdimen8 \scriptfont\thr@@
 \lineskiplimit\lineskip
 \vbox\bgroup\ialign\bgroup\hfil$\m@th\scriptstyle{##}$\hfil\crcr}%
\def\Sb{_\multilimits@}%
\def\endSb{\crcr\egroup\egroup\egroup}%
\def\Sp{^\multilimits@}%
\newdimen\ex@
\def\rightarrowfill@#1{$#1\m@th\mathord-\mkern-6mu\cleaders
 \hbox{$#1\mkern-2mu\mathord-\mkern-2mu$}\hfill
 \mkern-6mu\mathord\rightarrow$}%
\def\leftarrowfill@#1{$#1\m@th\mathord\leftarrow\mkern-6mu\cleaders
 \hbox{$#1\mkern-2mu\mathord-\mkern-2mu$}\hfill\mkern-6mu\mathord-$}%
\def\leftrightarrowfill@#1{$#1\m@th\mathord\leftarrow
\mkern-6mu\cleaders
 \hbox{$#1\mkern-2mu\mathord-\mkern-2mu$}\hfill
 \mkern-6mu\mathord\rightarrow$}%
\def\overrightarrow{\mathpalette\overrightarrow@}%
\def\overrightarrow@#1#2{\vbox{\ialign{##\crcr\rightarrowfill@#1\crcr
 \noalign{\kern-\ex@\nointerlineskip}$\m@th\hfil#1#2\hfil$\crcr}}}%
\def\overleftarrow{\mathpalette\overleftarrow@}%
\def\overleftarrow@#1#2{\vbox{\ialign{##\crcr\leftarrowfill@#1\crcr
 \noalign{\kern-\ex@\nointerlineskip}$\m@th\hfil#1#2\hfil$\crcr}}}%
\def\overleftrightarrow{\mathpalette\overleftrightarrow@}%
\def\overleftrightarrow@#1#2{\vbox{\ialign{##\crcr
   \leftrightarrowfill@#1\crcr
 \noalign{\kern-\ex@\nointerlineskip}$\m@th\hfil#1#2\hfil$\crcr}}}%
\def\underrightarrow{\mathpalette\underrightarrow@}%
\def\underrightarrow@#1#2{\vtop{\ialign{##\crcr$\m@th\hfil#1#2\hfil
  $\crcr\noalign{\nointerlineskip}\rightarrowfill@#1\crcr}}}%
\def\underleftarrow{\mathpalette\underleftarrow@}%
\def\underleftarrow@#1#2{\vtop{\ialign{##\crcr$\m@th\hfil#1#2\hfil
  $\crcr\noalign{\nointerlineskip}\leftarrowfill@#1\crcr}}}%
\def\underleftrightarrow{\mathpalette\underleftrightarrow@}%
\def\underleftrightarrow@#1#2{\vtop{\ialign{##\crcr$\m@th
  \hfil#1#2\hfil$\crcr
 \noalign{\nointerlineskip}\leftrightarrowfill@#1\crcr}}}%
\def\qopnamewl@#1{\mathop{\operator@font#1}\nlimits@}
\let\nlimits@\displaylimits
\def\setboxz@h{\setbox\z@\hbox}
\def\varlim@#1#2{\mathop{\vtop{\ialign{##\crcr
 \hfil$#1\m@th\operator@font lim$\hfil\crcr
 \noalign{\nointerlineskip}#2#1\crcr
 \noalign{\nointerlineskip\kern-\ex@}\crcr}}}}
 \def\rightarrowfill@#1{\m@th\setboxz@h{$#1-$}\ht\z@\z@
  $#1\copy\z@\mkern-6mu\cleaders
  \hbox{$#1\mkern-2mu\box\z@\mkern-2mu$}\hfill
  \mkern-6mu\mathord\rightarrow$}
\def\leftarrowfill@#1{\m@th\setboxz@h{$#1-$}\ht\z@\z@
  $#1\mathord\leftarrow\mkern-6mu\cleaders
  \hbox{$#1\mkern-2mu\copy\z@\mkern-2mu$}\hfill
  \mkern-6mu\box\z@$}
\def\projlim{\qopnamewl@{proj\,lim}}
\def\injlim{\qopnamewl@{inj\,lim}}
\def\varinjlim{\mathpalette\varlim@\rightarrowfill@}
\def\varprojlim{\mathpalette\varlim@\leftarrowfill@}
\def\varliminf{\mathpalette\varliminf@{}}
\def\varliminf@#1{\mathop{\underline{\vrule\@depth.2\ex@\@width\z@
   \hbox{$#1\m@th\operator@font lim$}}}}
\def\varlimsup{\mathpalette\varlimsup@{}}
\def\varlimsup@#1{\mathop{\overline
  {\hbox{$#1\m@th\operator@font lim$}}}}
\def\align{\@verbatim \frenchspacing\@vobeyspaces \@alignverbatim
You are using the "align" environment in a style in which it is not defined.}
\let\csname endalign*\endcsname =\endtrivlist
\def\alignat{\@verbatim \frenchspacing\@vobeyspaces \@alignatverbatim
You are using the "alignat" environment in a style in which it is not defined.}
\let\csname endalignat*\endcsname =\endtrivlist
\def\xalignat{\@verbatim \frenchspacing\@vobeyspaces \@xalignatverbatim
You are using the "xalignat" environment in a style in which it is not defined.}
\let\csname endxalignat*\endcsname =\endtrivlist
\def\gather{\@verbatim \frenchspacing\@vobeyspaces \@gatherverbatim
You are using the "gather" environment in a style in which it is not defined.}
\let\csname endgather*\endcsname =\endtrivlist
\def\multiline{\@verbatim \frenchspacing\@vobeyspaces \@multilineverbatim
You are using the "multiline" environment in a style in which it is not defined.}
\let\csname endmultiline*\endcsname =\endtrivlist
\def\arrax{\@verbatim \frenchspacing\@vobeyspaces \@arraxverbatim
You are using a type of "array" construct that is only allowed in AmS-LaTeX.}
\def\tabulax{\@verbatim \frenchspacing\@vobeyspaces \@tabulaxverbatim
You are using a type of "tabular" construct that is only allowed in AmS-LaTeX.}
\let\csname endarrax*\endcsname =\endtrivlist
\let\csname endtabulax*\endcsname =\endtrivlist
\def\@@eqncr{\let\@tempa\relax
    \ifcase\@eqcnt \def\@tempa{& & &}\or \def\@tempa{& &}%
      \else \def\@tempa{&}\fi
     \@tempa
     \if@eqnsw
        \iftag@
           \@taggnum
        \else
           \@eqnnum\stepcounter{equation}%
        \fi
     \fi
     \global\tag@false
     \global\@eqnswtrue
     \global\@eqcnt\z@\cr}
 \def\endequation{%
     \ifmmode\ifinner 
      \iftag@
        \addtocounter{equation}{-1} 
        $\hfil
           \displaywidth\linewidth\@taggnum\egroup \endtrivlist
        \global\tag@false
        \global\@ignoretrue   
      \else
        $\hfil
           \displaywidth\linewidth\@eqnnum\egroup \endtrivlist
        \global\tag@false
        \global\@ignoretrue 
      \fi
     \else   
      \iftag@
        \addtocounter{equation}{-1} 
        \eqno \hbox{\@taggnum}
        \global\tag@false%
        $$\global\@ignoretrue
      \else
        \eqno \hbox{\@eqnnum}
        $$\global\@ignoretrue
      \fi
     \fi\fi
 } 
 \newif\iftag@ \tag@false
 \def\tag{\@ifnextchar*{\@tagstar}{\@tag}}
 \def\@tag#1{%
     \global\tag@true
     \global\def\@taggnum{(#1)}}
 \def\@tagstar*#1{%
     \global\tag@true
     \global\def\@taggnum{#1}%
}
\begin{document}
\title[ rough generalized commutators{\ with Lipschitz functions}]{The
	boundedness of rough generalized commutators{\ with Lipschitz functions on }%
	homogeneous variable exponent Herz type spaces}
\author{FER\.{I}T G\"{U}RB\"{U}Z}
\address{Department of Mathematics, K\i rklareli University, K\i rklareli
39100, T\"{u}rkiye }
\email{feritgurbuz@klu.edu.tr}
\urladdr{}
\thanks{}
\curraddr{ }
\urladdr{}
\thanks{}
\date{}
\subjclass{Primary 46E35; Secondary 42B25, 42B35.}
\keywords{Rough kernel; generalized commutator; {Lipschitz function; }%
	variable exponent; homogeneous variable exponent Herz type spaces.}
\dedicatory{}
\thanks{}

\begin{abstract}
With the development of science, many nonlinear problems have emerged. At
this time, the classical function space has certain restrictions. For
example, it has lost its effectiveness for nonlinear problems under
nonstandard growth conditions. In the process of studying such nonlinear
problems, scholars are paying more and more attention to the transition from
classical function space to variable exponent function space. Also, there is
a big difference between variable exponent space and classical function
space, mainly because variable exponent function space has lost translation
invariance. This difference leads to many properties that hold in classical
space no longer hold in variable exponent space. It is important to
emphasize that variable exponent function spaces are a fundamental building
block in harmonic analysis. In recent years, there has been a growing
interest in the study of function spaces equipped with variable exponents,
leading to the development of a new framework known as variable exponent
analysis. These spaces provide a powerful tool for analyzing functions with
variable growth or decay rates and have found applications in various areas
of mathematics, including partial differential equations, harmonic analysis
and image processing. One can better understand the heterogeneity and
complexity inherent in many real world phenomena by taking into
consideration the theory of variable exponent function spaces. Thus, by
using certain properties of Lipschitz functions and variable exponents, in
this article, we establish the boundedness of a class of rough generalized
commutators {with Lipschitz functions} on{\ }homogeneous variable exponent
Herz and Herz-Morrey spaces.
\end{abstract}

\maketitle

\section{Introduction and Main Results}

Assume that $\Omega \in L^{s}(S^{n-1})\left( s>1\right) $, $\Omega $ is
homogeneous of degree zero on ${\mathbb{R}^{n}}$ with zero mean value on $%
S^{n-1}$, $S^{n-1}$ denotes the unit sphere on ${\mathbb{R}^{n}}$, $m$ is a
positive integer, $A\left( x\right) $ is a function defined on ${\mathbb{R}%
	^{n}}$ with $m$-th order derivatives on $L_{loc}\left( {\mathbb{R}^{n}}%
\right) $ and%
\begin{equation*}
	L_{loc}(%
	\mathbb{R}
	^{n})=\left \{ f:\int \limits_{K}\left \vert f\left( x\right) \right \vert
	dx<\infty ;\text{ for all compact subset }K\subset 
	\mathbb{R}
	^{n}\right \} .
\end{equation*}%
In analysis, theory and applications, the generalized commutators are
popular operators extensively studied over the past hundred years and
subjected to many generalizations in various settings. There are many
others, but we will limit ourselves to these two, for these are the main
focus of our objective. Thus, in this paper, we investigate the following
generalized commutators

\begin{equation*}
	I_{\Omega ,\phi }^{A,m}f(x)=\int \limits_{{\mathbb{R}^{n}}}\frac{\Omega (x-y)%
	}{|x-y|^{n-\phi +m-1}}R_{m}\left( A;x,y\right) f(y)dy\qquad 0<\phi <n
\end{equation*}%
and%
\begin{equation*}
	M_{\Omega ,\phi }^{A,m}f(x)=\sup_{r>0}\frac{1}{r^{n-\phi +m-1}}\int
	\limits_{|x-y|<r}\left \vert \Omega (x-y)R_{m}\left( A;x,y\right) f(y)\right
	\vert dy\qquad 0<\phi <n,
\end{equation*}%
where $m\in 
\mathbb{N}
$, and $R_{m}\left( A;x,y\right) $ denotes the $m$-th remainder of Taylor
series of $A$ at $x$ about $y$, more precisely, 
\begin{equation*}
	R_{m}\left( A;x,y\right) =A\left( x\right) -\sum \limits_{\left \vert
		\gamma \right \vert <m}\frac{1}{\gamma !}D^{\gamma }A\left( y\right) \left(
	x-y\right) ^{\gamma },
\end{equation*}%
where $D^{\gamma }A\in L^{r}\left( 
\mathbb{R}
^{n}\right) $ $\left( 1<r\leq \infty \right) $, $D^{\gamma }A\in BMO\left( 
\mathbb{R}
^{n}\right) $ or $D^{\gamma }A\in \dot{\Lambda}_{\beta }\left( {\mathbb{R}}%
^{n}\right) $ for $\left \vert \gamma \right \vert =m-1$ $\left( m\geq
1\right) $.

When $m=1$ above, it is obvious to see that $R_{1}\left( A;x,y\right)
=A\left( x\right) -A\left( y\right) $. In this case, $I_{\Omega ,\phi
}^{A,1}=I_{\Omega ,\phi }^{A}$ and $M_{\Omega ,\phi }^{A,1}=M_{\Omega ,\phi
}^{A}$ are just commutator operators, 
\begin{eqnarray*}
	I_{\Omega ,\phi }^{A}f\left( x\right) &=&A\left( x\right) I_{\Omega ,\phi
	}f\left( x\right) -I_{\Omega ,\phi }\left( Af\right) \left( x\right) \\
	&=&\int \limits_{{\mathbb{R}^{n}}}\frac{\Omega (x-y)}{|x-y|^{n-\phi }}\left(
	A\left( x\right) -A\left( y\right) \right) f(y)dy\qquad 0<\phi <n
\end{eqnarray*}%
and%
\begin{eqnarray*}
	M_{\Omega ,\phi }^{A}f\left( x\right) &=&A\left( x\right) M_{\Omega ,\phi
	}f\left( x\right) -M_{\Omega ,\phi }\left( Af\right) \left( x\right) \\
	&=&\sup_{r>0}\frac{1}{r^{n-\phi }}\int \limits_{|x-y|<r}\left \vert \Omega
	\left( x-y\right) \right \vert \left \vert A\left( x\right) -A\left(
	y\right) \right \vert \left \vert f(y)\right \vert dy\qquad 0<\phi <n.
\end{eqnarray*}%
Here, $I_{\Omega ,\phi }^{A,m}$ and $M_{\Omega ,\phi }^{A,m}$ are trivial
generalizations of the above commutators, respectively.

It is well known that the multilinear operators have been widely studied by
many authors. (For example, see \cite{Cohen, Gurbuz, Wu0} etc.) In 2013, Wu
and Lan \cite{Wu0} proved that $I_{\Omega ,\phi }^{A,m}$ and $M_{\Omega
	,\phi }^{A,m}$ are bounded from $L^{p\left( \cdot \right) }$ to $L^{q\left(
	\cdot \right) }$ for $D^{\gamma }A\in \dot{\Lambda}_{\beta }\left( {\mathbb{R%
}}^{n}\right) $. However, it is worth pointing out that so far Lipschitz
boundedness for $I_{\Omega ,\phi }^{A,m}$ and $M_{\Omega ,\phi }^{A,m}$ on
homogeneous variable exponent Herz type spaces has not been proved for $%
D^{\gamma }A\in \dot{\Lambda}_{\beta }\left( {\mathbb{R}}^{n}\right) $.

In this sense, we recall the definition of homogenous Lipschitz space $\dot{%
	\Lambda}_{\beta }\left( {\mathbb{R}}^{n}\right) $ as follows:

\begin{definition}
	$\left( \text{\textbf{Homogenous Lipschitz space}}\right) $ Let $0<\beta
	\leq 1$. The homogeneous Lipschitz space $\dot{\Lambda}_{\beta }$ is defined
	by%
	\begin{equation*}
		\dot{\Lambda}_{\beta }\left( {\mathbb{R}}^{n}\right) =\left \{ f:\left \Vert
		f\right \Vert _{\dot{\Lambda}_{\beta }}=\sup_{x,h\in 
			\mathbb{R}
			,h\neq 0}\frac{\left \vert \Delta _{h}^{\left[ \beta \right] +1}f\left(
			x\right) \right \vert }{\left \vert h\right \vert ^{\beta }}<\infty \right
		\} ,
	\end{equation*}%
	where $\Delta _{h}^{1}f\left( x\right) =f\left( x+h\right) -f\left( x\right) 
	$, $\Delta _{h}^{k+1}f\left( x\right) =\Delta _{h}^{k}f\left( x+h\right)
	-\Delta _{h}^{k}f\left( x\right) $, $k\geq 1$.
	
	Obviously, if $\beta >1$, then $\dot{\Lambda}_{\beta }\left( {\mathbb{R}}%
	^{n}\right) $ only includes constant. So we restrict $0<\beta \leq 1$ (see 
	\cite{Paluszynski} for details)
\end{definition}

Let us now give some necessary definitions and notations. Throughout this
work, $Q$ will denote a cube on $%
\mathbb{R}
^{n}$ with edges parallel to the axes. We will denote the cube with center $%
x_{0}$ and edge length $r$ by $Q=Q\left( x_{0},r\right) $. Given a cube $Q$
and $\delta >0$, we will denote the cube with center $Q$ and edge length $%
\delta $ times the edge length of $Q$ by $\delta Q$. For a cube $Q$, we use
the notation 
\begin{equation*}
	f_{Q}=\frac{1}{\left \vert Q\right \vert }\int \limits_{Q}f,
\end{equation*}%
where $f_{Q}$ is the center of $Q$.

Throughout this work, the constant $C>0$ may vary from step to another and
do depentent on parameters involved. The expression $f\lesssim g$ means $%
f\leqslant Cg$ and $f\thickapprox g$ implies that $f\lesssim g\lesssim f$.
Also, for simplicity, we denote $L^{p\left( \cdot \right) }\left( {\mathbb{R}%
	^{n}}\right) $ by $L^{p\left( \cdot \right) }$ and similarly $B(x,r)$ by $B$.

Some scholars found that as long as it is proved that the Hardy-Littlewood
maximal operator $\mathcal{M}$ is bounded on $L^{p\left( \cdot \right)
}\left( {\mathbb{R}^{n}}\right) $, many conclusions in the corresponding
classical harmonic analysis and function space theory can be established in
the corresponding variable exponent function space. In this context, we
denote by $\mathcal{P}\left( {\mathbb{R}^{n}}\right) $ the set of all
functions $p\left( \cdot \right) $ which are measurable and satisfy: $1\leq
p_{-}:=\limfunc{essinf}\limits_{x\in {\mathbb{R}^{n}}}p\left( x\right) $ and 
$p_{+}:=\limfunc{esssup}\limits_{x\in {\mathbb{R}^{n}}}p\left( x\right)
<\infty $.

Let $p\left( \cdot \right) \in \mathcal{P}\left( {\mathbb{R}^{n}}\right) $.
Variable exponent Lebesgue space $L^{p\left( \cdot \right) }\left( {\mathbb{R%
	}^{n}}\right) $ is defined by%
\begin{equation*}
	\left \Vert f\right \Vert _{L^{p\left( \cdot \right) }}:=\inf \left \{ \eta
	>0:\int \limits_{{\mathbb{R}^{n}}}\left( \frac{\left \vert f\left( x\right)
		\right \vert }{\eta }\right) ^{p\left( x\right) }dx\leq 1\right \} <\infty .
\end{equation*}

Let $f\in L_{loc}\left( {\mathbb{R}^{n}}\right) $. The Hardy-Littlewood
maximal operator $\mathcal{M}$ is defined by%
\begin{equation*}
	\mathcal{M}f\left( x\right) :=\sup_{r>0}r^{-n}\int \limits_{B}\left \vert
	f\left( y\right) \right \vert dy,\qquad \forall x\in {\mathbb{R}^{n},}
\end{equation*}%
where and follows $B=\left \{ y\in {\mathbb{R}^{n}:}\left \vert
x-y\right
\vert <r\right \} $ is the open ball centered at $x$ with radius $%
r$. $\mathcal{B}\left( {\mathbb{R}^{n}}\right) $ is the collection of $%
p\left( \cdot \right) \in \mathcal{P}\left( {\mathbb{R}^{n}}\right) $ that
satisfy the boundedness of $\mathcal{M}$ on $L^{p\left( \cdot \right)
}\left( {\mathbb{R}^{n}}\right) $ as $p\left( \cdot \right) \in \mathcal{B}%
\left( {\mathbb{R}^{n}}\right) $ in \cite{Capone}.

For all $x,y\in \mathbb{%
	\mathbb{R}
}^{n}$ and $C>0$, if $p\left( \cdot \right) \in \mathcal{P}\left( {\mathbb{R}%
	^{n}}\right) $ satisfies the requirement given below%
\begin{equation}
	\left \vert p\left( y\right) -p\left( x\right) \right \vert \leq \frac{-C}{%
		\ln \left( \left \vert x-y\right \vert \right) },\qquad \text{if }\left
	\vert x-y\right \vert \leq \frac{1}{2}  \label{11*}
\end{equation}%
\begin{equation}
	\left \vert p\left( y\right) -p\left( x\right) \right \vert \leq \frac{C}{%
		\ln \left( e+\left \vert x\right \vert \right) },\qquad \text{if }\left
	\vert x\right \vert \leq \left \vert y\right \vert ,  \label{12*}
\end{equation}%
then $p\left( \cdot \right) \in \mathcal{B}\left( {\mathbb{R}^{n}}\right) $
in \cite{Nekavinda}.

As we all know, H\"{o}lder's inequality is a very important tool in studying
the boundedness of operators. Of course, similar inequalities are also
needed in variable exponent function space, so there is a generalized H\"{o}%
lder's inequality.

For $p(\cdot )\in \mathcal{P}(\mathbb{R}^{n})$, $f\in L^{p\left( \cdot
	\right) }\left( {\mathbb{R}^{n}}\right) $ and $g\in L^{p^{\prime }\left(
	\cdot \right) }\left( {\mathbb{R}^{n}}\right) $, the integral form of H\"{o}%
lder's inequality in the context of variable exponent spaces takes the form 
\begin{equation}
	\int \limits_{%
		\mathbb{R}
		^{n}}\left \vert f\left( x\right) g\left( x\right) \right \vert dx\leq
	r_{p}\left \Vert f\right \Vert _{L^{p\left( \cdot \right) }}\left \Vert
	g\right \Vert _{L^{p^{\prime }\left( \cdot \right) }},  \label{3}
\end{equation}%
where the constant $r_{p}$ is given by%
\begin{equation*}
	r_{p}=1+\frac{1}{p_{-}}-\frac{1}{p_{+}},
\end{equation*}%
see Theorem 2.1 in \cite{Kovacik}.

On the other hand, Nakai and Sawano \cite{Nakai} defined another variable
exponent $\tilde{q}\left( \cdot \right) $ by%
\begin{equation*}
	\frac{1}{q}+\frac{1}{\tilde{q}\left( \cdot \right) }=\frac{1}{p\left( \cdot
		\right) }.
\end{equation*}%
Then, we have 
\begin{equation}
	\left \Vert f.g\right \Vert _{L^{p\left( \cdot \right) }}\lesssim \left
	\Vert f\right \Vert _{L^{\tilde{q}\left( \cdot \right) }}\left \Vert g\right
	\Vert _{L^{q}}  \label{31}
\end{equation}%
for $p(\cdot )\in \mathcal{P}(\mathbb{R}^{n})$, $\left( p\right) _{+}<q$ and
for all measurable functions $f$ and $g$.

Assume that $p(\cdot )\in \mathcal{P}(\mathbb{R}^{n})$ and satisfies (\ref%
{11*}) and (\ref{12*}). Then so does $p^{\prime }(\cdot )$. Generally, we
can observe that $p\left( \cdot \right) ,p^{\prime }\left( \cdot \right) \in 
\mathcal{B}\left( {\mathbb{R}^{n}}\right) $ from \cite{Nekavinda}..Thus, by
virtue of Lemma 1 in \cite{Izuki}, we can consider constants $\delta _{1}\in
\left( 0,\frac{1}{\left( p\right) _{+}}\right) $ and $\delta _{2}\in \left(
0,\frac{1}{\left( p^{\prime }\right) _{+}}\right) $ such that

\begin{equation}
	\frac{\left \Vert \chi _{S}\right \Vert _{L^{p\left( \cdot \right) }}}{\left
		\Vert \chi _{B}\right \Vert _{L^{p\left( \cdot \right) }}}\leq C\left( \frac{%
		\left \vert S\right \vert }{\left \vert B\right \vert }\right) ^{\delta
		_{1}},\frac{\left \Vert \chi _{S}\right \Vert _{L^{p^{\prime }\left( \cdot
				\right) }}}{\left \Vert \chi _{B}\right \Vert _{L^{p^{\prime }\left( \cdot
				\right) }}}\leq C\left( \frac{\left \vert S\right \vert }{\left \vert
		B\right \vert }\right) ^{\delta _{2}}  \label{1}
\end{equation}%
for $S\subset B$.

When $p\left( \cdot \right) \in \mathcal{B}\left( {\mathbb{R}^{n}}\right) $,
then{\large {\ }}%
\begin{equation}
	\Vert \chi _{B}\Vert _{L^{p(\cdot )}}\Vert \chi _{B}\Vert _{L^{p^{\prime
			}(\cdot )}}\leq C\left \vert B\right \vert  \label{5}
\end{equation}%
was proved in \cite{Izuki}.

Let $p(\cdot )\in \mathcal{P}(\mathbb{R}^{n})$ satisfying conditions (\ref%
{11*}) and (\ref{12*}), then

\begin{equation}
	\left \Vert \chi _{Q}\right \Vert _{L^{p\left( \cdot \right) }}\approx \left
	\{ 
	\begin{array}{c}
		\left \vert Q\right \vert ^{\frac{1}{p\left( x\right) }},\text{ if }\left
		\vert Q\right \vert \leq 2^{n}\text{ and }x\in Q, \\ 
		\left \vert Q\right \vert ^{\frac{1}{p\left( \infty \right) }},\text{ if }%
		\left \vert Q\right \vert >1,%
	\end{array}%
	\right.  \label{100}
\end{equation}%
for all cubes (balls) $Q\subset {\mathbb{R}^{n}}$, where $p\left( \infty
\right) =\lim \limits_{\left \vert x\right \vert \rightarrow \infty }p\left(
x\right) $ (see \cite{Diening}).

Let $l\in \mathbb{%
	\mathbb{Z}
}$, $B_{l}:=\{x\in \mathbb{R}^{n}:\left \vert x\right \vert \leq 2^{l}\}$, $%
\Delta _{l}:=B_{l}\setminus B_{l-1}$, $\chi _{l}:=\chi _{\Delta _{l}}$. For
any $m\in 
\mathbb{N}
_{0}=%
\mathbb{N}
\cup \left \{ 0\right \} $, we define

\begin{equation*}
	\tilde{\chi}_{m}:=\left \{ 
	\begin{array}{ccc}
		\chi _{\Delta _{m}} & , & m\geq 1 \\ 
		\chi _{B_{0}} & , & m=0%
	\end{array}%
	\right. \cdot
\end{equation*}%
By virtue of (\ref{1}), we obtain 
\begin{equation}
	\frac{\left \Vert \chi _{l}\right \Vert _{L^{p\left( \cdot \right) }}}{\left
		\Vert \chi _{B_{l}}\right \Vert _{L^{p\left( \cdot \right) }}}\leq C\left( 
	\frac{\left \vert \Delta _{l}\right \vert }{\left \vert B_{l}\right \vert }%
	\right) ^{\delta _{1}}\Longrightarrow \Vert \chi _{l}\Vert _{L^{p(\cdot
			)}}\lesssim \Vert \chi _{B_{l}}\Vert _{L^{p(\cdot )}}.  \label{32}
\end{equation}

\begin{definition}
	Let $\alpha \in 
	\mathbb{R}
	$, $0<q\leq \infty $ and $p\left( \cdot \right) \in \mathcal{P}\left( {%
		\mathbb{R}^{n}}\right) $. Then, the homogeneous variable exponent Herz space 
	$\dot{K}_{p(\cdot )}^{\alpha ,q}(\mathbb{R}^{n})$ is defined by%
	\begin{equation*}
		\dot{K}_{p(\cdot )}^{\alpha ,q}(\mathbb{R}^{n}):=\left \{ f\in
		L_{loc}^{p(\cdot )}\left( \mathbb{R}^{n}\setminus \{0\} \right) :\Vert
		f\Vert _{\dot{K}_{p(\cdot )}^{\alpha ,q}(\mathbb{R}^{n})}<\infty \right \} ,
	\end{equation*}%
	where%
	\begin{equation*}
		\Vert f\Vert _{\dot{K}_{p(\cdot )}^{\alpha ,q}(\mathbb{R}^{n})}:=\left( \sum
		\limits_{l=-\infty }^{\infty }\Vert 2^{l\alpha }f\chi _{l}\Vert _{L^{p(\cdot
				)}}^{q}\right) ^{\frac{1}{q}}
	\end{equation*}%
	with the usual modifications made when $q=\infty $.
\end{definition}

The first main result we wanted to find in this work is as follows.

\begin{theorem}
	\label{Theorem}Let $0<\phi <n$ and $\Omega $ be homogeneous of degree zero
	with $\Omega \in L^{s}(S^{n-1})\left( s>1\right) $. Let also $0<q_{1}\leq
	q_{2}<\infty $, $0<\beta <1$ and $\alpha \in 
	\mathbb{R}
	$ such that $\phi +\beta +n\delta _{2}<\alpha <$ $n\delta _{1}-\left( \phi
	+\beta +\frac{n-1}{s}\right) $ with $\delta _{1}$, $\delta _{2}\in \left(
	0,1\right) $ satisfying (\ref{1}). Assume that $p(\cdot ),p_{1}\left( \cdot
	\right) ,p_{2}\left( \cdot \right) \in \mathcal{P}(\mathbb{R}^{n})$ satisfy (%
	\ref{11*}) and (\ref{12*}) and define $p_{1}\left( \cdot \right) $ and $%
	p_{2}\left( \cdot \right) $ by $\frac{1}{p_{2}\left( \cdot \right) }=\frac{1%
	}{p_{1}\left( \cdot \right) }-\frac{\beta +\phi }{n}$. If $D^{\gamma }A\in 
	\dot{\Lambda}_{\beta }\left( {\mathbb{R}}^{n}\right) $ $\left( \left \vert
	\gamma \right \vert =m-1,m\geq 2\right) $ and $\left( p_{1}^{\prime }\right)
	_{+}<s$, then the following inequalities hold:%
	\begin{equation}
		\left \Vert I_{\Omega ,\phi }^{A,m}f\right \Vert _{\dot{K}_{p_{2}(\cdot
				)}^{\alpha ,q_{2}}(\mathbb{R}^{n})}\lesssim \sum \limits_{\left \vert
			\gamma \right \vert =m-1}\left \Vert D^{\gamma }A\right \Vert _{\dot{\Lambda}%
			_{\beta }\left( {\mathbb{R}}^{n}\right) }\left \Vert f\right \Vert _{\dot{K}%
			_{p_{1}(\cdot )}^{\alpha ,q_{1}}(\mathbb{R}^{n})},  \label{5*}
	\end{equation}%
	\begin{equation}
		\left \Vert M_{\Omega ,\phi }^{A,m}f\right \Vert _{\dot{K}_{p_{2}(\cdot
				)}^{\alpha ,q_{2}}(\mathbb{R}^{n})}\lesssim \sum \limits_{\left \vert
			\gamma \right \vert =m-1}\left \Vert D^{\gamma }A\right \Vert _{\dot{\Lambda}%
			_{\beta }\left( {\mathbb{R}}^{n}\right) }\left \Vert f\right \Vert _{\dot{K}%
			_{p_{1}(\cdot )}^{\alpha ,q_{1}}(\mathbb{R}^{n})}.  \label{6}
	\end{equation}
\end{theorem}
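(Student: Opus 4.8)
The plan is to reduce both operators to a rough fractional integral of order $\phi+\beta$ and then run the usual dyadic–annular argument for homogeneous variable exponent Herz spaces. First, since $D^{\gamma}A\in\dot{\Lambda}_{\beta}({\mathbb{R}}^{n})$ for all $|\gamma|=m-1$, the classical bound for the Taylor remainder gives $|R_{m}(A;x,y)|\lesssim|x-y|^{m-1+\beta}\sum_{|\gamma|=m-1}\|D^{\gamma}A\|_{\dot{\Lambda}_{\beta}}$ (see \cite{Cohen}). Inserting this into the kernels — and using $|x-y|<r$ to remove the dilation in the maximal operator — yields the pointwise estimate
\begin{equation*}
|I_{\Omega,\phi}^{A,m}g(x)|+M_{\Omega,\phi}^{A,m}g(x)\ \lesssim\ \Big(\sum_{|\gamma|=m-1}\|D^{\gamma}A\|_{\dot{\Lambda}_{\beta}}\Big)\,\mathcal{I}_{\phi+\beta}(|g|)(x),\qquad \mathcal{I}_{\phi+\beta}h(x):=\int_{{\mathbb{R}}^{n}}\frac{|\Omega(x-y)|}{|x-y|^{n-\phi-\beta}}\,h(y)\,dy .
\end{equation*}
The assumed range of $\alpha$ forces $\phi+\beta<n$, so the relation $\tfrac1{p_{2}(\cdot)}=\tfrac1{p_{1}(\cdot)}-\tfrac{\phi+\beta}{n}$ does define an admissible exponent. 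In addition I will use that $I_{\Omega,\phi}^{A,m}$ and $M_{\Omega,\phi}^{A,m}$ are bounded from $L^{p_{1}(\cdot)}$ to $L^{p_{2}(\cdot)}$, with operator norm $\lesssim\sum_{|\gamma|=m-1}\|D^{\gamma}A\|_{\dot{\Lambda}_{\beta}}$ \cite{Wu0}; the hypotheses there — in particular $(p_{1}')_{+}<s$ and the log--Hölder conditions $(\ref{11*})$, $(\ref{12*})$ — coincide with ours.

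Next I expand the Herz norm as $\sum_{l\in\mathbb{Z}}2^{l\alpha q_{2}}\|\chi_{l}\,I_{\Omega,\phi}^{A,m}f\|_{L^{p_{2}(\cdot)}}^{q_{2}}$, write $f=\sum_{j\in\mathbb{Z}}f\chi_{j}$, and — using linearity of $I_{\Omega,\phi}^{A,m}$ and subadditivity of $M_{\Omega,\phi}^{A,m}$ over the pairwise disjoint annuli — split the inner sum, for each $l$, into the three ranges $j\le l-2$, $|j-l|\le1$, $j\ge l+2$; call the contributions $\mathrm{I}_{l}$, $\mathrm{II}_{l}$, $\mathrm{III}_{l}$. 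The local piece $\mathrm{II}_{l}$ is handled by the $L^{p_{1}(\cdot)}\to L^{p_{2}(\cdot)}$ bound above: $\|\chi_{l}\,I_{\Omega,\phi}^{A,m}(f\chi_{j})\|_{L^{p_{2}(\cdot)}}\lesssim\big(\sum_{|\gamma|=m-1}\|D^{\gamma}A\|_{\dot{\Lambda}_{\beta}}\big)\|f\chi_{j}\|_{L^{p_{1}(\cdot)}}$ for $|j-l|\le1$; summing in $l$ bounds this piece by a constant times $\|f\|_{\dot{K}_{p_{1}(\cdot)}^{\alpha,q_{2}}}\le\|f\|_{\dot{K}_{p_{1}(\cdot)}^{\alpha,q_{1}}}$, the last inequality being the embedding $\ell^{q_{1}}\hookrightarrow\ell^{q_{2}}$ valid since $q_{1}\le q_{2}$. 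The maximal operator is treated identically.

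For the far pieces $\mathrm{I}_{l}$ and $\mathrm{III}_{l}$ I use the pointwise estimate from the first step and the geometry of the annuli: if $x\in\Delta_{l}$, $y\in\Delta_{j}$ and $|l-j|\ge2$, then $|x-y|\approx 2^{\max(l,j)}$, whence
\begin{equation*}
\mathcal{I}_{\phi+\beta}(|f\chi_{j}|)(x)\ \lesssim\ 2^{-\max(l,j)(n-\phi-\beta)}\int_{\Delta_{j}}|\Omega(x-y)|\,|f(y)|\,dy ,
\end{equation*}
and to the last integral I apply the generalized Hölder inequality $(\ref{3})$ followed by the Nakai--Sawano factorization $(\ref{31})$ with $\tfrac1s+\tfrac1{\widetilde s(\cdot)}=\tfrac1{p_{1}'(\cdot)}$ (admissible because $(p_{1}')_{+}<s$), obtaining
\begin{equation*}
\int_{\Delta_{j}}|\Omega(x-y)|\,|f(y)|\,dy\ \lesssim\ \|f\chi_{j}\|_{L^{p_{1}(\cdot)}}\,\|\Omega(x-\cdot)\chi_{j}\|_{L^{s}}\,\|\chi_{j}\|_{L^{\widetilde s(\cdot)}}.
\end{equation*}
From the homogeneity of $\Omega$ and $\Omega\in L^{s}(S^{n-1})$ one gets, uniformly in $x\in\Delta_{l}$, the bounds $\|\Omega(x-\cdot)\chi_{j}\|_{L^{s}}\lesssim 2^{(l(n-1)+j)/s}\|\Omega\|_{L^{s}(S^{n-1})}$ when $j\le l-2$, and $\|\Omega(x-\cdot)\chi_{j}\|_{L^{s}}\lesssim 2^{jn/s}\|\Omega\|_{L^{s}(S^{n-1})}$ when $j\ge l+2$. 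Taking the $L^{p_{2}(\cdot)}$–norm over $\Delta_{l}$ of the resulting ($x$–independent) bound, using $\|\chi_{l}\|_{L^{p_{2}(\cdot)}}\lesssim\|\chi_{B_{l}}\|_{L^{p_{2}(\cdot)}}$ from $(\ref{32})$, and estimating the remaining products of characteristic–function norms by $(\ref{1})$, $(\ref{5})$, $(\ref{100})$ together with $\tfrac1{p_{2}(\cdot)}=\tfrac1{p_{1}(\cdot)}-\tfrac{\phi+\beta}{n}$, I arrive at
\begin{equation*}
2^{l\alpha}\|\chi_{l}\,\mathcal{I}_{\phi+\beta}(|f\chi_{j}|)\|_{L^{p_{2}(\cdot)}}\ \lesssim\ \|\Omega\|_{L^{s}(S^{n-1})}\;2^{-\varepsilon|l-j|}\;2^{j\alpha}\|f\chi_{j}\|_{L^{p_{1}(\cdot)}},\qquad |l-j|\ge2,
\end{equation*}
with $\varepsilon>0$: the condition $\alpha>\phi+\beta+n\delta_{2}$ makes the exponent negative in the regime $j\ge l+2$, and $\alpha<n\delta_{1}-\big(\phi+\beta+\tfrac{n-1}{s}\big)$ does so in the regime $j\le l-2$. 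Summing this over $j$ — by Young's convolution inequality if $q_{2}\ge1$, and by the inequality $(\sum_{j}a_{j})^{q_{2}}\le\sum_{j}a_{j}^{q_{2}}$ if $0<q_{2}<1$, in either case using $\sum_{k\in\mathbb{Z}}2^{-\varepsilon\min(1,q_{2})|k|}<\infty$ — and then over $l$, I control $\mathrm{I}_{l}$ and $\mathrm{III}_{l}$ by a constant times $\|\Omega\|_{L^{s}(S^{n-1})}\|f\|_{\dot{K}_{p_{1}(\cdot)}^{\alpha,q_{2}}}\le\|\Omega\|_{L^{s}(S^{n-1})}\|f\|_{\dot{K}_{p_{1}(\cdot)}^{\alpha,q_{1}}}$. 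Adding the three contributions and restoring the factor $\sum_{|\gamma|=m-1}\|D^{\gamma}A\|_{\dot{\Lambda}_{\beta}}$ gives $(\ref{5*})$; the argument for $M_{\Omega,\phi}^{A,m}$ is word for word the same, since it obeys the same pointwise bound, and this gives $(\ref{6})$.

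The step I expect to be the main obstacle is the extraction of the clean geometric decay $2^{-\varepsilon|l-j|}$ at the end of the preceding paragraph: it requires keeping precise track of the powers of $2$ coming from $\|\chi_{B_{l}}\|_{L^{p_{2}(\cdot)}}$, $\|\chi_{\Delta_{j}}\|_{L^{\widetilde s(\cdot)}}$ and $\|\Omega(x-\cdot)\chi_{j}\|_{L^{s}}$, with the complication that $(\ref{100})$ uses the local value $p(x)$ on small balls but $p(\infty)$ on large ones, so that over the full range $l,j\in\mathbb{Z}$ the bookkeeping must be carried out through the scale–invariant ratio estimates $(\ref{1})$ and $(\ref{5})$. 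It is exactly the demand that the net exponent of $2^{|l-j|}$ be strictly negative in each of the two far regimes which forces the two–sided restriction on $\alpha$ in the hypotheses; making these two inequalities come out correctly is where the real work lies. Everything else — the reduction, the local estimate and the final $\ell^{q}$–summation — is routine.
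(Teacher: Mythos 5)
Your proposal follows essentially the same route as the paper's own proof: annular decomposition of $f$ with three ranges, the Taylor-remainder bound $|R_m(A;x,y)|\lesssim |x-y|^{m-1+\beta}\sum_{|\gamma|=m-1}\|D^{\gamma}A\|_{\dot\Lambda_\beta}$ for the far pieces combined with generalized H\"older, the Nakai--Sawano factorization, Lemma~\ref{Lemma3} and the characteristic-function estimates (\ref{1})--(\ref{100}), the Wu--Lan $L^{p_1(\cdot)}\to L^{p_2(\cdot)}$ bound for the near-diagonal piece, and pointwise domination of $M^{A,m}_{\Omega,\phi}$ at the end, with your claimed intermediate decay $2^{-\varepsilon|l-j|}$ governed by the two sides of the $\alpha$-window matching the paper's (\ref{37}) and (\ref{43}). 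The only differences are cosmetic: you pass to the $\ell^{q_1}\hookrightarrow\ell^{q_2}$ embedding at the end rather than via Jensen at the outset, and you sum the off-diagonal tails by discrete Young's inequality rather than the paper's $0<q_1\le 1$ versus $1<q_1<\infty$ case split with H\"older.
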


When $m=1$ in Theorem \ref{Theorem}, we have the following:

\begin{corollary}
	Under the conditions of Theorem \ref{Theorem}, the following boundedness
	estimates hold: 
	\begin{equation*}
		\left \Vert I_{\Omega ,\phi }^{A}f\right \Vert _{\dot{K}_{p_{2}(\cdot
				)}^{\alpha ,q_{2}}(\mathbb{R}^{n})}\lesssim \left \Vert A\right \Vert _{\dot{%
				\Lambda}_{\beta }\left( {\mathbb{R}}^{n}\right) }\left \Vert f\right \Vert _{%
			\dot{K}_{p_{1}(\cdot )}^{\alpha ,q_{1}}(\mathbb{R}^{n})},
	\end{equation*}%
	\begin{equation*}
		\left \Vert M_{\Omega ,\phi }^{A}f\right \Vert _{\dot{K}_{p_{2}(\cdot
				)}^{\alpha ,q_{2}}(\mathbb{R}^{n})}\lesssim \left \Vert A\right \Vert _{\dot{%
				\Lambda}_{\beta }\left( {\mathbb{R}}^{n}\right) }\left \Vert f\right \Vert _{%
			\dot{K}_{p_{1}(\cdot )}^{\alpha ,q_{1}}(\mathbb{R}^{n})}.
	\end{equation*}
\end{corollary}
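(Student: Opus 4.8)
The corollary is the case $m=1$ of Theorem \ref{Theorem}: then $R_{1}(A;x,y)=A(x)-A(y)$, so $I_{\Omega ,\phi }^{A,1}=I_{\Omega ,\phi }^{A}$ and $M_{\Omega ,\phi }^{A,1}=M_{\Omega ,\phi }^{A}$, and the sums over $|\gamma|=m-1$ collapse to the single term $\Vert A\Vert _{\dot{\Lambda}_{\beta }}$; although Theorem \ref{Theorem} is stated for $m\geq 2$, the argument below applies verbatim to $m=1$, so I describe it for general $m$. The plan is to reduce both (\ref{5*}) and (\ref{6}) to one inequality for a rough fractional integral of order $\phi +\beta $ and then run the standard block decomposition for homogeneous variable exponent Herz spaces. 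The first step is the classical Lipschitz control of the Taylor remainder: when $D^{\gamma }A\in \dot{\Lambda}_{\beta }({\mathbb{R}}^{n})$ for all $|\gamma |=m-1$, one has $|R_{m}(A;x,y)|\lesssim |x-y|^{m-1+\beta }\sum_{|\gamma |=m-1}\Vert D^{\gamma }A\Vert _{\dot{\Lambda}_{\beta }}$. Inserting this in the definitions (and using that $\Omega $ is homogeneous of degree zero) gives the pointwise domination
\begin{equation*}
|I_{\Omega ,\phi }^{A,m}f(x)|\lesssim \left( \sum_{|\gamma |=m-1}\Vert D^{\gamma }A\Vert _{\dot{\Lambda}_{\beta }}\right) Tf(x),\qquad Tf(x):=\int_{{\mathbb{R}}^{n}}\frac{|\Omega (x-y)|}{|x-y|^{n-(\phi +\beta )}}|f(y)|\,dy,
\end{equation*}
and the same bound for $M_{\Omega ,\phi }^{A,m}f(x)$, because after the remainder estimate $M_{\Omega ,\phi }^{A,m}f$ is controlled by the rough fractional maximal operator of order $\phi +\beta $, which is pointwise dominated by $Tf$ since $0<\phi +\beta <n$ (forced by $\tfrac{1}{p_{2}(\cdot )}=\tfrac{1}{p_{1}(\cdot )}-\tfrac{\beta +\phi }{n}>0$). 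So it suffices to prove $\Vert Tf\Vert _{\dot{K}_{p_{2}(\cdot )}^{\alpha ,q_{2}}}\lesssim \Vert f\Vert _{\dot{K}_{p_{1}(\cdot )}^{\alpha ,q_{1}}}$.

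Write $f=\sum_{j\in {\mathbb{Z}}}f_{j}$ with $f_{j}=f\chi _{j}$ and expand
\begin{equation*}
\Vert Tf\Vert _{\dot{K}_{p_{2}(\cdot )}^{\alpha ,q_{2}}}^{q_{2}}=\sum_{k}2^{k\alpha q_{2}}\Vert \chi _{k}Tf\Vert _{L^{p_{2}(\cdot )}}^{q_{2}}\lesssim \sum_{k}2^{k\alpha q_{2}}\left( \sum_{j}\Vert \chi _{k}Tf_{j}\Vert _{L^{p_{2}(\cdot )}}\right) ^{q_{2}};
\end{equation*}
splitting the inner sum into $j\leq k-2$, $|j-k|\leq 1$, $j\geq k+2$ produces three pieces $E_{1},E_{2},E_{3}$, each of which I aim to bound by $\lesssim \Vert f\Vert _{\dot{K}_{p_{1}(\cdot )}^{\alpha ,q_{1}}}^{q_{2}}$. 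For the local piece $E_{2}$ I would use no kernel estimate: since $D^{\gamma }A\in \dot{\Lambda}_{\beta }$, $\tfrac{1}{p_{2}(\cdot )}=\tfrac{1}{p_{1}(\cdot )}-\tfrac{\beta +\phi }{n}$ and $(p_{1}^{\prime })_{+}<s$, the $L^{p_{1}(\cdot )}\to L^{p_{2}(\cdot )}$ boundedness of the order-$(\phi +\beta )$ rough fractional integral (contained in the Wu--Lan result recalled above) gives $\Vert \chi _{k}Tf_{j}\Vert _{L^{p_{2}(\cdot )}}\leq \Vert Tf_{j}\Vert _{L^{p_{2}(\cdot )}}\lesssim \Vert f_{j}\Vert _{L^{p_{1}(\cdot )}}$; summing over the $O(1)$ indices $j$ with $|j-k|\leq 1$, using $2^{k\alpha }\approx 2^{j\alpha }$ there and then $q_{1}\leq q_{2}$, one gets $E_{2}\lesssim \Vert f\Vert _{\dot{K}_{p_{1}(\cdot )}^{\alpha ,q_{2}}}^{q_{2}}\leq \Vert f\Vert _{\dot{K}_{p_{1}(\cdot )}^{\alpha ,q_{1}}}^{q_{2}}$.

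The main work is in $E_{1}$ and $E_{3}$, where I argue pointwise. For $x\in \Delta _{k}$, $y\in \Delta _{j}$ one has $|x-y|\approx 2^{\max (j,k)}$, so $Tf_{j}(x)\lesssim 2^{-\max (j,k)(n-\phi -\beta )}\int_{\Delta _{j}}|\Omega (x-y)||f(y)|\,dy$. I would estimate the integral by Hölder's inequality with the pair $(s,s^{\prime })$, control $\Vert \Omega (x-\cdot )\chi _{j}\Vert _{L^{s}}$ by integrating $|\Omega |^{s}$ over the relevant spherical cap (when $2^{j}\ll 2^{k}$) or over a full spherical shell (when $2^{j}\gg 2^{k}$), estimate $\Vert f\chi _{j}\Vert _{L^{s^{\prime }}}\lesssim \Vert f\chi _{j}\Vert _{L^{p_{1}(\cdot )}}\Vert \chi _{j}\Vert _{L^{v(\cdot )}}$ (admissible because $(p_{1}^{\prime })_{+}<s$ forces $s^{\prime }<(p_{1})_{-}$), and finally convert every $\Vert \chi _{B_{l}}\Vert $ into an explicit power of $2^{l}$ via (\ref{1}), (\ref{5}), (\ref{100}), (\ref{32}). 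The outcome should be $\Vert \chi _{k}Tf_{j}\Vert _{L^{p_{2}(\cdot )}}\lesssim \Vert \Omega \Vert _{L^{s}(S^{n-1})}\,2^{-|k-j|\sigma (\alpha )}\Vert f\chi _{j}\Vert _{L^{p_{1}(\cdot )}}$, with $\sigma (\alpha )>0$ exactly on the stated window: the upper restriction $\alpha <n\delta _{1}-(\phi +\beta +\tfrac{n-1}{s})$ forces convergence of the $j\leq k-2$ sum, while the lower restriction $\phi +\beta +n\delta _{2}<\alpha $ forces convergence of the $j\geq k+2$ sum. Plugging this into the $k$-sum and using $q_{1}\leq q_{2}$ together with Young's inequality for sequences (or, when $q_{2}<1$, the elementary subadditive inequality $\Vert \sum x_{i}\Vert ^{q_{2}}\leq \sum \Vert x_{i}\Vert ^{q_{2}}$) finishes $E_{1}+E_{3}$, and hence both (\ref{5*}) and (\ref{6}); the corollary is the case $m=1$.

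The genuinely delicate part — everything else being the standard Herz-space template — is this final collapse in $E_{1},E_{3}$: one has to thread the rough factor $\Omega \in L^{s}(S^{n-1})$ through Hölder's inequality by way of the spherical-cap estimate (which is what produces the loss $2^{k(n-1)/s}$ in the regime $j\leq k-2$, and hence the summand $\tfrac{n-1}{s}$ in the upper bound on $\alpha $), respect the two regimes in (\ref{100}) for $\Vert \chi _{B_{l}}\Vert _{L^{p(\cdot )}}$ according to whether $|B_{l}|$ is small or large, balance these characteristic-function norms across different scales by combining (\ref{1}) and (\ref{5}) with the exponents $\delta _{1}$ and $\delta _{2}$, and verify that the admissible window $\phi +\beta +n\delta _{2}<\alpha <n\delta _{1}-(\phi +\beta +\tfrac{n-1}{s})$ is precisely the range in which both geometric series converge. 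The remaining ingredients — the remainder estimate, the reduction to $T$, the local term, and the passage back to the $\dot{K}_{p_{1}(\cdot )}^{\alpha ,q_{1}}$-norm — are routine.
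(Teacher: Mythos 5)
Your proposal is correct and follows essentially the same route as the paper: the paper derives the corollary purely by specializing Theorem \ref{Theorem} to $m=1$ (where $R_{1}(A;x,y)=A(x)-A(y)$ collapses the sum over $|\gamma|=m-1$ to the single term $\Vert A\Vert_{\dot{\Lambda}_{\beta}}$), and your sketch of how that theorem is proved --- the remainder bound (\ref{21}), the three-block Herz decomposition, Lemma \ref{Lemma3} for the rough kernel with the $2^{(n-1)/s}$ loss, the characteristic-function norm manipulations via (\ref{1}), (\ref{5}), (\ref{100}), (\ref{32}), the Wu--Lan $L^{p_{1}(\cdot)}\to L^{p_{2}(\cdot)}$ bound for the diagonal block, and the pointwise domination of $M_{\Omega,\phi}^{A}$ by the integral operator with $|\Omega|$ --- matches the paper's proof of Theorem \ref{Theorem} step for step. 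Your side remark that the theorem is stated for $m\geq2$ but the argument runs unchanged at $m=1$ is a fair observation about a small inconsistency in the paper's hypotheses.
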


\begin{definition}
	Let $\alpha \in 
	\mathbb{R}
	$, $0<q\leq \infty $, $p\left( \cdot \right) \in \mathcal{P}\left( {\mathbb{R%
		}^{n}}\right) $ and $0\leq \lambda <\infty $. Then, the homogeneous variable
	exponent Herz-Morrey space $M\dot{K}_{p(\cdot )}^{\alpha ,q}(\mathbb{R}^{n})$
	is defined by%
	\begin{equation*}
		M\dot{K}_{p(\cdot )}^{\alpha ,q}(\mathbb{R}^{n}):=\left \{ f\in
		L_{loc}^{p(\cdot )}\left( \mathbb{R}^{n}\setminus \{0\} \right) :\Vert
		f\Vert _{M\dot{K}_{p(\cdot )}^{\alpha ,q}(\mathbb{R}^{n})}<\infty \right \} ,
	\end{equation*}%
	where%
	\begin{equation*}
		\Vert f\Vert _{M\dot{K}_{p(\cdot )}^{\alpha ,q}(\mathbb{R}^{n})}:=\sup_{L\in 
			\mathbb{%
				\mathbb{Z}
		}}2^{-L\lambda }\left( \sum \limits_{l=-\infty }^{L}\Vert 2^{l\alpha }f\chi
		_{l}\Vert _{L^{p(\cdot )}}^{q}\right) ^{\frac{1}{q}}
	\end{equation*}%
	with the usual modifications made when $q=\infty $. Obviously, when $\lambda
	=0$, $M\dot{K}_{p(\cdot )}^{\alpha ,q}(\mathbb{R}^{n})=\dot{K}_{p(\cdot
		)}^{\alpha ,q}(\mathbb{R}^{n})$.
\end{definition}

Finally, the other main result we wanted to find in this article is as
follows.

\begin{theorem}
	\label{Theorem1}Let $0\leq \lambda <\infty $ such that $\phi +\beta +n\delta
	_{2}+\lambda <\alpha <$ $n\delta _{1}+\lambda -\left( \phi +\beta +\frac{n-1%
	}{s}\right) $ with $\delta _{1}$, $\delta _{2}\in \left( 0,1\right) $
	satisfying (\ref{1}) and under stipulations in Theorem \ref{Theorem}, the
	generalized commutators $I_{\Omega ,\phi }^{A,m}$ and $M_{\Omega ,\phi
	}^{A,m}$ satisfy%
	\begin{equation}
		\left \Vert I_{\Omega ,\phi }^{A,m}f\right \Vert _{M\dot{K}_{p_{2}(\cdot
				)}^{\alpha ,q_{2}}(\mathbb{R}^{n})}\lesssim \sum \limits_{\left \vert
			\gamma \right \vert =m-1}\left \Vert D^{\gamma }A\right \Vert _{\dot{\Lambda}%
			_{\beta }\left( {\mathbb{R}}^{n}\right) }\left \Vert f\right \Vert _{M\dot{K}%
			_{p_{1}(\cdot )}^{\alpha ,q_{1}}(\mathbb{R}^{n})},  \label{7}
	\end{equation}%
	\begin{equation}
		\left \Vert M_{\Omega ,\phi }^{A,m}f\right \Vert _{M\dot{K}_{p_{2}(\cdot
				)}^{\alpha ,q_{2}}(\mathbb{R}^{n})}\lesssim \sum \limits_{\left \vert
			\gamma \right \vert =m-1}\left \Vert D^{\gamma }A\right \Vert _{\dot{\Lambda}%
			_{\beta }\left( {\mathbb{R}}^{n}\right) }\left \Vert f\right \Vert _{M\dot{K}%
			_{p_{1}(\cdot )}^{\alpha ,q_{1}}(\mathbb{R}^{n})}.  \label{8}
	\end{equation}
\end{theorem}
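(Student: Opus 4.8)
The plan is to reproduce the proof of Theorem~\ref{Theorem} \emph{inside} the supremum defining the Herz--Morrey norm, using that the hypotheses of Theorem~\ref{Theorem1} are precisely those of Theorem~\ref{Theorem} with $\alpha$ replaced by $\alpha-\lambda$. Fix $L\in\mathbb{Z}$, write $f=\sum_{j\in\mathbb{Z}}f_{j}$ with $f_{j}:=f\chi_{j}$, and for each $l\le L$ split $\big(I_{\Omega,\phi}^{A,m}f\big)\chi_{l}$ according to the ranges $j\le l-2$, $l-1\le j\le l+1$ and $j\ge l+2$. The $\ell^{q_{2}}$ quasi-triangle inequality then breaks $2^{-L\lambda}\big(\sum_{l\le L}2^{l\alpha q_{2}}\Vert(I_{\Omega,\phi}^{A,m}f)\chi_{l}\Vert_{L^{p_{2}(\cdot)}}^{q_{2}}\big)^{1/q_{2}}$ into three pieces $E_{1}+E_{2}+E_{3}$, and it suffices to bound each, uniformly in $L$, by a constant multiple of $\sum_{|\gamma|=m-1}\Vert D^{\gamma}A\Vert_{\dot{\Lambda}_{\beta}}\Vert f\Vert_{M\dot{K}_{p_{1}(\cdot)}^{\alpha,q_{1}}}$. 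The same decomposition, with the supremum over $r$ retained throughout, will handle $M_{\Omega,\phi}^{A,m}$.

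For the diagonal piece $E_{2}$ I would invoke the $L^{p_{1}(\cdot)}\to L^{p_{2}(\cdot)}$ boundedness of $I_{\Omega,\phi}^{A,m}$ and $M_{\Omega,\phi}^{A,m}$ for $D^{\gamma}A\in\dot{\Lambda}_{\beta}$ established by Wu and Lan \cite{Wu0}, whose exponent relation $\tfrac1{p_{2}(\cdot)}=\tfrac1{p_{1}(\cdot)}-\tfrac{\beta+\phi}{n}$ is the one assumed here; this gives $\Vert(I_{\Omega,\phi}^{A,m}f_{j})\chi_{l}\Vert_{L^{p_{2}(\cdot)}}\lesssim\sum_{|\gamma|=m-1}\Vert D^{\gamma}A\Vert_{\dot{\Lambda}_{\beta}}\Vert f_{j}\Vert_{L^{p_{1}(\cdot)}}$. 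Summing over $j\in\{l-1,l,l+1\}$, using $q_{1}\le q_{2}$ (so that $\ell^{q_{1}}\hookrightarrow\ell^{q_{2}}$) and absorbing the one-step index shift into the constant (which alters $2^{-L\lambda}$ only by a bounded factor) yields $E_{2}\lesssim\sum_{|\gamma|=m-1}\Vert D^{\gamma}A\Vert_{\dot{\Lambda}_{\beta}}\Vert f\Vert_{M\dot{K}_{p_{1}(\cdot)}^{\alpha,q_{1}}}$.

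The substance lies in $E_{1}$ and $E_{3}$. On those ranges $x\in\Delta_{l}$ and $y\in\Delta_{j}$ force $|x-y|\approx 2^{\max(l,j)}$, so I would first bound the kernel by its size, using the classical pointwise estimate $|R_{m}(A;x,y)|\lesssim|x-y|^{m-1+\beta}\sum_{|\gamma|=m-1}\Vert D^{\gamma}A\Vert_{\dot{\Lambda}_{\beta}}$ (valid since $D^{\gamma}A\in\dot{\Lambda}_{\beta}$ for $|\gamma|=m-1$, $m\ge2$), which reduces the kernel to the effective fractional size $|\Omega(x-y)|\,|x-y|^{-(n-\phi-\beta)}$. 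For fixed $x\in\Delta_{l}$ I would estimate $\int_{\Delta_{j}}|\Omega(x-y)||f(y)|\,dy$ by the generalized H\"{o}lder inequality (\ref{3}) in the pair $p_{1}(\cdot),p_{1}'(\cdot)$, then apply the Nakai--Sawano inequality (\ref{31}) with $\tfrac1{\tilde q(\cdot)}=\tfrac1{p_{1}'(\cdot)}-\tfrac1s$ (permissible because $(p_{1}')_{+}<s$) to split off $\Vert\Omega(x-\cdot)\chi_{j}\Vert_{L^{s}}$, a quantity controlled by a power of $2^{\max(l,j)}$ times $\Vert\Omega\Vert_{L^{s}(S^{n-1})}$ uniformly in $x\in\Delta_{l}$. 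Thus $I_{\Omega,\phi}^{A,m}f_{j}$ is bounded on $\Delta_{l}$ by a constant and $\Vert(I_{\Omega,\phi}^{A,m}f_{j})\chi_{l}\Vert_{L^{p_{2}(\cdot)}}$ equals that constant times $\Vert\chi_{l}\Vert_{L^{p_{2}(\cdot)}}$. Converting every characteristic-function norm into a power of $2$ by means of (\ref{100}), the ratio estimates (\ref{1}) with the exponents $\delta_{1},\delta_{2}$, the product bound (\ref{5}) and (\ref{32}), one reaches an estimate $2^{l\alpha}\Vert\sum_{j\le l-2}(I_{\Omega,\phi}^{A,m}f_{j})\chi_{l}\Vert_{L^{p_{2}(\cdot)}}\lesssim\sum_{|\gamma|=m-1}\Vert D^{\gamma}A\Vert_{\dot{\Lambda}_{\beta}}\sum_{j\le l-2}2^{-(l-j)\eta_{1}}a_{j}$, and likewise for $E_{3}$ with $\sum_{j\ge l+2}2^{-(j-l)\eta_{2}}a_{j}$, where $a_{k}:=2^{k\alpha}\Vert f_{k}\Vert_{L^{p_{1}(\cdot)}}$, the exponent $\eta_{1}>0$ being governed by $\alpha<n\delta_{1}+\lambda-(\phi+\beta+\tfrac{n-1}{s})$ and $\eta_{2}>0$ by $\alpha>\phi+\beta+n\delta_{2}+\lambda$ (the $\tfrac{n-1}{s}$ arising from the rough factor $\Omega$).

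To finish one feeds these into $E_{1}$ and $E_{3}$: each is a discrete convolution of $(a_{k})$ against a fixed geometrically decaying sequence, and its contribution is estimated by splitting the inner sum at scale $L$. For the indices $\le L$ one uses discrete Young's inequality, $q_{1}\le q_{2}$ and the bound $\big(\sum_{k\le L}a_{k}^{q_{1}}\big)^{1/q_{1}}\le 2^{L\lambda}\Vert f\Vert_{M\dot{K}_{p_{1}(\cdot)}^{\alpha,q_{1}}}$ (immediate from the definition of the Herz--Morrey norm), which reproduces the $2^{L\lambda}$ cancelling the prefactor; for the indices $>L$ (which occur only in $E_{3}$) one uses $a_{j}\le 2^{j\lambda}\Vert f\Vert_{M\dot{K}_{p_{1}(\cdot)}^{\alpha,q_{1}}}$ and sums the tail $\sum_{j>L}2^{-(j-l)\eta_{2}}2^{j\lambda}$, which converges exactly because $\eta_{2}>\lambda$ under the $\lambda$-shifted lower bound on $\alpha$. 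This gives (\ref{7}), and (\ref{8}) for $M_{\Omega,\phi}^{A,m}$ follows verbatim since the pointwise kernel and remainder bounds survive inside the supremum over $r$ and $E_{2}$ uses its $L^{p_{1}(\cdot)}\to L^{p_{2}(\cdot)}$ estimate from \cite{Wu0}. I expect the main obstacle to be the far-region work in $E_{1},E_{3}$: extracting an $x$-uniform bound from the two-step H\"{o}lder treatment of the rough kernel, tracking the characteristic-function norms carefully enough to match $\eta_{1},\eta_{2}$ to the stated window, and then running the Young-with-tails summation so that the Herz--Morrey supremum is exactly recovered — that is, confirming the admissible range for $\alpha$ is the Theorem~\ref{Theorem} range shifted by $\lambda$. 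The diagonal term and the $q_{1}\le q_{2}$ reduction are routine.
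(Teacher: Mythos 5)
Your overall strategy mirrors the paper's: decompose $f=\sum_{j}f\chi_{j}$, split $(I_{\Omega,\phi}^{A,m}f)\chi_{l}$ into far-below, diagonal, and far-above ranges, handle the diagonal by the Wu--Lan $L^{p_{1}(\cdot)}\to L^{p_{2}(\cdot)}$ bound, handle the off-diagonal ranges via the pointwise remainder estimate (\ref{21}), H\"older (\ref{3}), the Nakai--Sawano split (\ref{31}), the characteristic-function estimates (\ref{1}), (\ref{5}), (\ref{32}), (\ref{100}), and finally split the far-above inner sum at scale $L$ and control the tail with the per-term bound $a_{j}\le 2^{j\lambda}\Vert f\Vert_{M\dot{K}_{p_{1}(\cdot)}^{\alpha,q_{1}}}$. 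That is precisely the paper's decomposition $X_{1}+Y_{1}+Z_{1}$ with $Z_{1}=Z_{11}+Z_{12}$, the per-term bound being the paper's (\ref{38}), and the pointwise domination $M_{\Omega,\phi}^{A,m}f\le\widetilde{T}_{|\Omega|,\phi}^{A}(|f|)$ giving (\ref{8}).

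There is, however, a concrete error in your treatment of $E_{1}$. From (\ref{37}) one has, with your notation $a_{j}=2^{j\alpha}\Vert f_{j}\Vert_{L^{p_{1}(\cdot)}}$,
\[
2^{l\alpha}\bigl\Vert(I_{\Omega,\phi}^{A,m}f_{j})\chi_{l}\bigr\Vert_{L^{p_{2}(\cdot)}}
\lesssim 2^{-(l-j)\eta_{1}}\,a_{j},\qquad
\eta_{1}=n\delta_{1}-\alpha-\Bigl(\phi+\beta+\tfrac{n-1}{s}\Bigr),
\]
so the hypothesis $\alpha<n\delta_{1}+\lambda-(\phi+\beta+\tfrac{n-1}{s})$ gives only $\eta_{1}>-\lambda$, \emph{not} $\eta_{1}>0$ as you assert. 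When $\lambda>0$ and $\alpha$ lies in the extended window $n\delta_{1}-(\phi+\beta+\tfrac{n-1}{s})\le\alpha<n\delta_{1}+\lambda-(\phi+\beta+\tfrac{n-1}{s})$ the kernel $2^{-(l-j)\eta_{1}}$ does not decay, so the discrete Young $+$ cumulative-bound $\bigl(\sum_{k\le L}a_{k}^{q_{1}}\bigr)^{1/q_{1}}\le 2^{L\lambda}\Vert f\Vert$ route breaks down for $E_{1}$. The remedy is exactly the device you already invoke for the $j>L$ tail of $E_{3}$: replace each $a_{j}$ by $a_{j}\le 2^{j\lambda}\Vert f\Vert_{M\dot{K}_{p_{1}(\cdot)}^{\alpha,q_{1}}}$ (the paper's (\ref{38})), factor $2^{j\lambda}=2^{l\lambda}2^{-(l-j)\lambda}$, and observe that the resulting geometric sum has ratio $2^{-(\eta_{1}+\lambda)}<1$ while the $2^{l\lambda}$ accumulates to $\approx 2^{L\lambda}$, cancelling the prefactor. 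With that single correction (and the same Jensen/H\"older dichotomy in $q_{1}$ that the paper runs), your proposal matches the paper's proof; your identification of $\eta_{2}=\alpha-(\phi+\beta+n\delta_{2})>\lambda$, the tail summation of $E_{3}$, the diagonal piece, and the reduction of (\ref{8}) to (\ref{7}) via the positive sublinear majorant are all correct.
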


For $m=1$ in Theorem \ref{Theorem1}, we get

\begin{corollary}
	Let $0\leq \lambda <\infty $ such that $\phi +\beta +n\delta _{2}+\lambda
	<\alpha <$ $n\delta _{1}+\lambda -\left( \phi +\beta +\frac{n-1}{s}\right) $
	with $\delta _{1}$, $\delta _{2}\in \left( 0,1\right) $ satisfying (\ref{1})
	and under the conditions of Theorem \ref{Theorem}, the following fundamental
	inequalities hold:%
	\begin{equation*}
		\left \Vert I_{\Omega ,\phi }^{A}f\right \Vert _{M\dot{K}_{p_{2}(\cdot
				)}^{\alpha ,q_{2}}(\mathbb{R}^{n})}\lesssim \left \Vert A\right \Vert _{\dot{%
				\Lambda}_{\beta }\left( {\mathbb{R}}^{n}\right) }\left \Vert f\right \Vert
		_{M\dot{K}_{p_{1}(\cdot )}^{\alpha ,q_{1}}(\mathbb{R}^{n})},
	\end{equation*}%
	\begin{equation*}
		\left \Vert M_{\Omega ,\phi }^{A}f\right \Vert _{M\dot{K}_{p_{2}(\cdot
				)}^{\alpha ,q_{2}}(\mathbb{R}^{n})}\lesssim \left \Vert A\right \Vert _{\dot{%
				\Lambda}_{\beta }\left( {\mathbb{R}}^{n}\right) }\left \Vert f\right \Vert
		_{M\dot{K}_{p_{1}(\cdot )}^{\alpha ,q_{1}}(\mathbb{R}^{n})}.
	\end{equation*}
\end{corollary}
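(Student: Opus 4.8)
The plan is to reduce Theorem~\ref{Theorem1} to a Herz--Morrey version of the annulus-wise estimates underlying Theorem~\ref{Theorem}: the pointwise bounds and the variable-exponent H\"older arguments are identical for both spaces, and the only genuinely new feature is the supremum over $L$ in the $M\dot K$-norm. Fix $L\in\mathbb Z$, write $f=\sum_{j\in\mathbb Z}f\chi_j$, and for each $l\le L$ split the action of $T\in\{I_{\Omega,\phi}^{A,m},\,M_{\Omega,\phi}^{A,m}\}$ on $\Delta_l$ into a near part $\sum_{|j-l|\le 1}(Tf\chi_j)\chi_l$, an inner far part $\sum_{j\le l-2}(Tf\chi_j)\chi_l$, and an outer far part $\sum_{j\ge l+2}(Tf\chi_j)\chi_l$. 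By the (quasi-)triangle inequality in $\ell^{q_2}$,
\begin{equation*}
	2^{-L\lambda}\Big(\sum_{l\le L}\big(2^{l\alpha}\,\|Tf\,\chi_l\|_{L^{p_2(\cdot)}}\big)^{q_2}\Big)^{1/q_2}\ \lesssim\ E_{\mathrm{near}}+E_{\mathrm{in}}+E_{\mathrm{out}},
\end{equation*}
and it suffices to bound each term by $\big(\sum_{|\gamma|=m-1}\|D^\gamma A\|_{\dot\Lambda_\beta}\big)\,\|f\|_{M\dot K^{\alpha,q_1}_{p_1(\cdot)}(\mathbb R^n)}$ with a constant independent of $L$.

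For $E_{\mathrm{near}}$ I would quote the boundedness $T\colon L^{p_1(\cdot)}\to L^{p_2(\cdot)}$ with norm $\lesssim\sum_{|\gamma|=m-1}\|D^\gamma A\|_{\dot\Lambda_\beta}$ (Wu--Lan \cite{Wu0}; its hypotheses are exactly the conditions Theorem~\ref{Theorem} imposes on $p_1(\cdot),p_2(\cdot),\Omega,D^\gamma A$), so that $2^{l\alpha}\|Tf\chi_j\|_{L^{p_2(\cdot)}}\lesssim\sum_\gamma\|D^\gamma A\|_{\dot\Lambda_\beta}\,2^{j\alpha}\|f\chi_j\|_{L^{p_1(\cdot)}}$ whenever $|j-l|\le 1$; summing in $l$ and using $q_1\le q_2$, hence $\ell^{q_1}\hookrightarrow\ell^{q_2}$, controls $E_{\mathrm{near}}$. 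For the far parts, for $x\in\Delta_l$ and $y\in\Delta_j$ with $|l-j|\ge 2$ one has $|x-y|\approx 2^{\max(l,j)}$; combining this with the Taylor-remainder estimate $|R_m(A;x,y)|\lesssim|x-y|^{m-1+\beta}\sum_{|\gamma|=m-1}\|D^\gamma A\|_{\dot\Lambda_\beta}$ reduces the kernel to size $\lesssim 2^{-\max(l,j)(n-\phi-\beta)}$. I would then apply the variable-exponent H\"older inequalities \eqref{3}, \eqref{31} to absorb the rough factor $\Omega(x-\cdot)\in L^s$ --- this is where $(p_1')_+<s$ enters, through the auxiliary exponent $\tilde q(\cdot)$ with $\tfrac1s+\tfrac1{\tilde q(\cdot)}=\tfrac1{p_1'(\cdot)}$ --- together with the elementary spherical bound $\big(\int_{\Delta_j}|\Omega(x-y)|^s\,dy\big)^{1/s}\lesssim 2^{\theta}\|\Omega\|_{L^s(S^{n-1})}$ (the exponent $\theta$ depending on whether $j<l$ or $j>l$ and on $n,s$), and then convert every norm of a characteristic function by \eqref{1}, \eqref{5}, \eqref{100}, \eqref{32} and the relation $\tfrac1{p_2(\cdot)}=\tfrac1{p_1(\cdot)}-\tfrac{\beta+\phi}{n}$. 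This should produce geometric gains
\begin{equation*}
	2^{l\alpha}\|Tf\chi_j\|_{L^{p_2(\cdot)}}\ \lesssim\ \Big(\sum_{|\gamma|=m-1}\|D^\gamma A\|_{\dot\Lambda_\beta}\Big)\begin{cases} 2^{-(l-j)\epsilon_1}\,2^{j\alpha}\|f\chi_j\|_{L^{p_1(\cdot)}}, & j\le l-2,\\[4pt] 2^{-(j-l)\epsilon_2}\,2^{j\alpha}\|f\chi_j\|_{L^{p_1(\cdot)}}, & j\ge l+2,\end{cases}
\end{equation*}
where $\epsilon_1,\epsilon_2$ are affine in $\alpha$ (built from $\phi,\beta,s$ and the admissible $\delta_1,\delta_2$ of \eqref{1}) and --- as in the proof of Theorem~\ref{Theorem} --- are arranged so that $\epsilon_1+\lambda>0\Leftrightarrow\alpha<n\delta_1+\lambda-(\phi+\beta+\tfrac{n-1}{s})$ and $\epsilon_2-\lambda>0\Leftrightarrow\alpha>\phi+\beta+n\delta_2+\lambda$; note that $\epsilon_1,\epsilon_2$ themselves need not be positive once $\lambda>0$.

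The Herz--Morrey endgame is where the window $\phi+\beta+n\delta_2+\lambda<\alpha<n\delta_1+\lambda-(\phi+\beta+\tfrac{n-1}{s})$ is consumed. Insert the last display into $E_{\mathrm{in}}$ and $E_{\mathrm{out}}$. For $\lambda>0$ I would use the elementary bound $2^{j\alpha}\|f\chi_j\|_{L^{p_1(\cdot)}}\le 2^{j\lambda}\,\|f\|_{M\dot K^{\alpha,q_1}_{p_1(\cdot)}(\mathbb R^n)}$, valid for every $j$ straight from the definition of the $M\dot K$-norm: in $E_{\mathrm{in}}$ the inner sum $\sum_{j\le l-2}2^{-(l-j)\epsilon_1}2^{j\lambda}=2^{l\lambda}\sum_{k\ge 2}2^{-k(\epsilon_1+\lambda)}$ converges exactly when $\epsilon_1+\lambda>0$, and in $E_{\mathrm{out}}$ the inner sum $\sum_{j\ge l+2}2^{-(j-l)\epsilon_2}2^{j\lambda}=2^{l\lambda}\sum_{k\ge 2}2^{-k(\epsilon_2-\lambda)}$ converges exactly when $\epsilon_2-\lambda>0$; in both one is then left with $2^{-L\lambda}\big(\sum_{l\le L}2^{l\lambda q_2}\big)^{1/q_2}\lesssim 1$. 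When $\lambda=0$ the factor $2^{l\lambda q_2}$ can no longer be summed, and one runs instead the honest Fubini argument: pass through $\ell^{q_1}\hookrightarrow\ell^{q_2}$, use H\"older in $j$ if $q_1>1$ and the subadditivity of $t\mapsto t^{q_1}$ if $q_1\le 1$, interchange the $l$- and $j$-sums, and sum the geometric series --- this needs only $\epsilon_1,\epsilon_2>0$ and reproduces \eqref{5*}--\eqref{6}, so $\lambda=0$ is literally Theorem~\ref{Theorem}. Since every bound is uniform in $L$, taking $\sup_L$ yields \eqref{7}; \eqref{8} follows verbatim because $M_{\Omega,\phi}^{A,m}f$ satisfies the same far-annulus pointwise estimates (the radius in the supremum that captures $\Delta_j$ is $r\approx 2^{\max(l,j)}$) and the same $L^{p_1(\cdot)}\to L^{p_2(\cdot)}$ bound.

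The step I expect to be the real obstacle is the last one: closing the double sum over the output index $l\le L$ and the input index $j\in\mathbb Z$ \emph{uniformly in $L$} while the weights $2^{-L\lambda}$ and $2^{j\lambda}$ and the geometric factors $2^{-|l-j|\epsilon_i}$ all compete --- most delicately in $E_{\mathrm{out}}$, where $j$ runs above $L$ and is not truncated, so that the $\lambda$-shift in the admissible range of $\alpha$ is forced and any wasteful estimate destroys the sharp window. A secondary nuisance is the variable-exponent bookkeeping that fixes $\epsilon_1,\epsilon_2$: since $L^{p(\cdot)}$ lacks translation invariance one cannot replace $\|\chi_{B_l}\|_{L^{p_2(\cdot)}}/\|\chi_{B_j}\|_{L^{p_1(\cdot)}}$ by a clean power of $2^{l-j}$, but must route it through \eqref{1}, \eqref{5}, \eqref{100} and the fractional relation between $p_1(\cdot)$ and $p_2(\cdot)$, keeping careful track of which of $\delta_1,\delta_2$ appears where.
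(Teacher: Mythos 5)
Your proposal re-derives the full statement of Theorem~\ref{Theorem1} for general~$m$, whereas the paper obtains the Corollary as an immediate specialization: when $m=1$ the only multi-index $\gamma$ with $|\gamma|=m-1=0$ is $\gamma=0$, so $D^\gamma A=A$, the sum $\sum_{|\gamma|=m-1}\|D^\gamma A\|_{\dot\Lambda_\beta}$ collapses to $\|A\|_{\dot\Lambda_\beta}$, and the Corollary is literally (\ref{7})--(\ref{8}) with $m=1$; no fresh estimate is required. That said, your reconstruction of the underlying theorem follows the same architecture as the paper's proof: annular decomposition into near and far parts, the Wu--Lan $L^{p_1(\cdot)}\to L^{p_2(\cdot)}$ bound for the diagonal block, and for the off-diagonals Lemma~\ref{Lemma3}'s $L^s(S^{n-1})$ kernel bounds together with the Taylor-remainder estimate~(\ref{21}) and the variable-exponent bookkeeping~(\ref{1}), (\ref{5}), (\ref{31}), (\ref{100}); this is not a genuinely different route. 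Where your presentation is cleaner is the Herz--Morrey endgame: you apply the one-term lower bound $2^{j\alpha}\|f\chi_j\|_{L^{p_1(\cdot)}}\le 2^{j\lambda}\|f\|_{M\dot K^{\alpha,q_1}_{p_1(\cdot)}}$ uniformly in both far regions and close the geometric series with $\epsilon_1+\lambda>0$ and $\epsilon_2-\lambda>0$, avoiding the paper's split of $Z_1$ into $Z_{11}$ and $Z_{12}$ and the separate treatment of $q_1\le 1$ versus $q_1>1$ in that block. You also correctly flag that the closing bound $2^{-L\lambda}\bigl(\sum_{l\le L}2^{l\lambda q_2}\bigr)^{1/q_2}\lesssim 1$ degenerates when $\lambda=0$ and that this case must revert to Theorem~\ref{Theorem} --- a point the paper's proof of Theorem~\ref{Theorem1} does not make explicit, even though its own final sum $\sup_L 2^{-L\lambda q_1}\sum_{k\le L}2^{k\lambda q_1}$ likewise needs $\lambda>0$ to converge.
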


\section{Main Lemmas}

Before proving Theorems \ref{Theorem} and \ref{Theorem1}, following lemmas
are needed. These lemmas will be helpful in proving main results.

\begin{lemma}
	\label{Lemma1}$\left( \text{see \cite{Cohen}}\right) $ Let $A\left( x\right) 
	$ be a function defined on ${\mathbb{R}^{n}}$ with $m$-th order derivatives
	on $L_{loc}^{q}\left( {\mathbb{R}^{n}}\right) $ for $\left \vert \gamma
	\right \vert =m$ and some $q>n$. Then,%
	\begin{equation*}
		\left \vert R_{m}\left( A;x,y\right) \right \vert \leq C\left \vert
		x-y\right \vert ^{m}\sum \limits_{\left \vert \gamma \right \vert =m}\left( 
		\frac{1}{\left \vert \tilde{Q}\right \vert }\int \limits_{\tilde{Q}}\left
		\vert D^{\gamma }A\left( z\right) \right \vert ^{q}dz\right) ^{\frac{1}{q}},
	\end{equation*}%
	where $\tilde{Q}$ is the cube centered at $x$ and having diameter $5\sqrt{n}%
	\left \vert x-y\right \vert $.
\end{lemma}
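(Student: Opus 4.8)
The plan is to reduce the estimate to a scaled Sobolev--Morrey inequality on the cube $\tilde{Q}$. Two elementary reductions come first. If $P$ is a polynomial on ${\mathbb{R}^{n}}$ of degree at most $m-1$, then its Taylor expansion about any point terminates, so $R_{m}\left( P;x,y\right) =0$ and hence $R_{m}\left( A+P;x,y\right) =R_{m}\left( A;x,y\right) $. Also, by a routine mollification one may assume $A\in C^{\infty }$: prove the bound for $A_{\varepsilon }=A\ast \varphi _{\varepsilon }$ with a constant independent of $\varepsilon $ and let $\varepsilon \rightarrow 0$, noting that $q>n$ forces $A\in C_{loc}^{m-1}$ by Sobolev embedding, so $D^{\gamma }A_{\varepsilon }\rightarrow D^{\gamma }A$ locally uniformly for $\left\vert \gamma \right\vert \leq m-1$ and in $L^{q}\left( \tilde{Q}\right) $ for $\left\vert \gamma \right\vert =m$. (In fact the argument below goes through verbatim for $A\in W_{loc}^{m,q}$.)

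Write $\ell :=5\left\vert x-y\right\vert $ for the side length of $\tilde{Q}$, let $P$ be the averaged Taylor polynomial of $A$ of degree $m-1$ over $\tilde{Q}$, and put $g:=A-P$. The Bramble--Hilbert (Deny--Lions) lemma bounds the full $W^{m,q}\left( \tilde{Q}\right) $ norm of $g$ by the top-order seminorm $\sum_{\left\vert \beta \right\vert =m}\Vert D^{\beta }A\Vert _{L^{q}\left( \tilde{Q}\right) }$; since $q>n$ each residual order obeys $m-\left\vert \gamma \right\vert \geq 1>n/q$, so the Sobolev embedding $W^{m,q}\left( \tilde{Q}\right) \hookrightarrow C^{m-1}\bigl( \overline{\tilde{Q}}\bigr) $ is available. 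Rescaling $\tilde{Q}$ to the unit cube and tracking the powers of $\ell $ (using $D^{\gamma }u=\ell ^{\left\vert \gamma \right\vert }\left( D^{\gamma }A\right) \left( x+\ell \,\cdot \right) $ for $u:=A\left( x+\ell \,\cdot \right) $, together with $\left\vert \tilde{Q}\right\vert =\ell ^{n}$) then converts the unit-cube bound into, for every $\left\vert \gamma \right\vert \leq m-1$,
\begin{equation*}
\sup_{z\in \tilde{Q}}\bigl\vert D^{\gamma }g\left( z\right) \bigr\vert \leq C\,\ell ^{\,m-\left\vert \gamma \right\vert }\Bigl( \tfrac{1}{\left\vert \tilde{Q}\right\vert }\int_{\tilde{Q}}\sum_{\left\vert \beta \right\vert =m}\left\vert D^{\beta }A\left( z\right) \right\vert ^{q}\,dz\Bigr) ^{1/q}.
\end{equation*}

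To finish, observe that $x,y\in \tilde{Q}$ and $\left\vert x-y\right\vert \leq \ell $; using the invariance, the identity $R_{m}\left( A;x,y\right) =R_{m}\left( g;x,y\right) =g\left( x\right) -\sum_{\left\vert \gamma \right\vert <m}\tfrac{1}{\gamma !}D^{\gamma }g\left( y\right) \left( x-y\right) ^{\gamma }$, and the triangle inequality,
\begin{equation*}
\bigl\vert R_{m}\left( A;x,y\right) \bigr\vert \leq \sup_{\tilde{Q}}\left\vert g\right\vert +\sum_{\left\vert \gamma \right\vert <m}\tfrac{\left\vert x-y\right\vert ^{\left\vert \gamma \right\vert }}{\gamma !}\sup_{\tilde{Q}}\bigl\vert D^{\gamma }g\bigr\vert \leq C\,\ell ^{m}\Bigl( \tfrac{1}{\left\vert \tilde{Q}\right\vert }\int_{\tilde{Q}}\sum_{\left\vert \beta \right\vert =m}\left\vert D^{\beta }A\right\vert ^{q}\Bigr) ^{1/q}.
\end{equation*}
Since $\ell =5\left\vert x-y\right\vert \approx \left\vert x-y\right\vert $ and $\bigl( \sum_{\left\vert \beta \right\vert =m}a_{\beta }^{q}\bigr) ^{1/q}\leq \sum_{\left\vert \beta \right\vert =m}a_{\beta }$ for $q\geq 1$ (a finite sum, taking $a_{\beta }=\bigl( \tfrac{1}{\left\vert \tilde{Q}\right\vert }\int_{\tilde{Q}}\left\vert D^{\beta }A\right\vert ^{q}\bigr) ^{1/q}$), this is exactly the asserted inequality. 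I expect the real work to lie in the middle step --- the $\ell $-scaled Sobolev--Morrey bound for the derivatives of $g$ --- which is precisely where the hypothesis $q>n$ is used essentially, being what makes the top Sobolev embedding land in $L^{\infty }$ rather than some $L^{r}$; the remaining steps are multi-index bookkeeping.
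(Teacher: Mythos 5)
The paper does not actually prove Lemma \ref{Lemma1}; it only cites it from Cohen--Gosselin (\cite{Cohen}), so there is no internal argument to compare against. Your Bramble--Hilbert / Sobolev--Morrey proof is nonetheless complete and correct, and it is essentially the standard modern way to obtain this kind of Taylor-remainder bound. The two reductions (annihilating a degree-$(m-1)$ polynomial, mollifying so that the pointwise Taylor formula makes sense) are sound, the scaling bookkeeping $\ell^{m-|\gamma|}\ell^{-n/q}=\ell^{m-|\gamma|}|\tilde{Q}|^{-1/q}$ is correct, and the hypothesis $q>n$ enters exactly where you say it should, namely in the embedding $W^{m,q}(Q_{0})\hookrightarrow C^{m-1}(\overline{Q_{0}})$. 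One small geometric fact is used silently and is worth making explicit: $y\in\tilde{Q}$, which holds because $\tilde{Q}$ has side length $5|x-y|$ and center $x$, so every coordinate of $y$ is within $|x-y|<\tfrac{5}{2}|x-y|$ of the corresponding coordinate of $x$; without this, $\sup_{\tilde{Q}}|D^{\gamma}g|$ would not control $|D^{\gamma}g(y)|$. For the record, the Cohen--Gosselin argument reaches the same inequality a bit more concretely by expressing both $A(x)$ and each $D^{\gamma}A(y)$ as averages over a common cube plus Taylor-remainder corrections and then invoking the scaled Sobolev--Poincar\'{e} inequality; your Bramble--Hilbert formulation repackages the identical mechanism (subtract the averaged Taylor polynomial to make a Poincar\'{e}-type bound available, then apply Morrey's embedding) and is arguably cleaner to keep track of.
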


\begin{lemma}
	\label{Lemma1*}$\left( \text{see \cite{Paluszynski}}\right) $ For $0<\beta
	<1 $, $1\leq q<\infty $, we have%
	\begin{eqnarray*}
		\left \Vert f\right \Vert _{\dot{\Lambda}_{\beta }\left( {\mathbb{R}}%
			^{n}\right) } &=&\sup_{Q}\frac{1}{\left \vert Q\right \vert ^{1+\frac{\beta 
				}{n}}}\int \limits_{Q}\left \vert f-f_{Q}\right \vert \\
		&\approx &\sup_{Q}\frac{1}{\left \vert Q\right \vert ^{\frac{\beta }{n}}}%
		\left( \frac{1}{\left \vert Q\right \vert }\int \limits_{Q}\left \vert
		f-f_{Q}\right \vert ^{q}dx\right) ^{\frac{1}{q}}.
	\end{eqnarray*}
\end{lemma}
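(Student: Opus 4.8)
The plan is to prove both comparabilities at once by establishing, for each fixed exponent $q$ with $1\le q<\infty$, the cyclic chain
\[
\|f\|_{\dot{\Lambda}_{\beta}}\lesssim \sup_Q\frac{1}{|Q|^{1+\frac{\beta}{n}}}\int_Q|f-f_Q|\,dx\le \sup_Q\frac{1}{|Q|^{\frac{\beta}{n}}}\Bigl(\frac{1}{|Q|}\int_Q|f-f_Q|^q\,dx\Bigr)^{\frac{1}{q}}\lesssim \|f\|_{\dot{\Lambda}_{\beta}} .
\]
The middle inequality is just Jensen's inequality for the probability measure $|Q|^{-1}\,dx$ on $Q$, so the content lies in the two outer bounds, and the "$=$" in the statement is read as the resulting two-sided estimate.

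For the rightmost bound I would fix a cube $Q$ of side length $\ell$; since $0<\beta<1$ we have $[\beta]=0$, so the difference operator in the definition of $\dot{\Lambda}_{\beta}$ is the first difference and $|f(x)-f(y)|\le\|f\|_{\dot{\Lambda}_{\beta}}|x-y|^{\beta}\le C\|f\|_{\dot{\Lambda}_{\beta}}\ell^{\beta}$ for $x,y\in Q$. Averaging in $y$ over $Q$ gives $|f(x)-f_Q|\le C\|f\|_{\dot{\Lambda}_{\beta}}\ell^{\beta}$ pointwise on $Q$, and since $\ell^{\beta}=|Q|^{\beta/n}$ up to a dimensional constant, taking the $L^q$ average over $Q$ and then the supremum over all cubes closes this end.

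For the leftmost (Campanato-type) bound, write $N$ for the $q=1$ quantity above. The key step is the concentric-doubling estimate $|f_R-f_{2R}|\lesssim N|R|^{\beta/n}$, obtained from $|f_R-f_{2R}|\le|R|^{-1}\int_R|f-f_{2R}|$ by enlarging the domain of integration to $2R$. Given a point $x$ in a cube $Q$ of side $\ell$, I would take $R_1$ to be the cube centered at $x$ of side $3\ell$ (so $Q\subset R_1$ and $|f_Q-f_{R_1}|\lesssim N\ell^{\beta}$) together with the dyadically shrinking concentric cubes $R_{j+1}=\tfrac12 R_j$; telescoping and summing the geometric series in $2^{-j\beta}$ — convergent precisely because $\beta>0$ — yields $|f_{R_1}-f_{R_j}|\lesssim N\ell^{\beta}$ uniformly in $j$. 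Since $R_j\downarrow\{x\}$, the Lebesgue differentiation theorem gives $f_{R_j}\to f(x)$ at a.e.\ $x$, hence $|f(x)-f_Q|\lesssim N\ell^{\beta}$ for a.e.\ $x\in Q$. Applying this with a cube of side $2|x-y|$ containing both $x$ and $y$ then gives $|f(x)-f(y)|\lesssim N|x-y|^{\beta}$ for a.e.\ pair $(x,y)$, which forces $f$ to agree a.e.\ with a H\"older function of norm $\lesssim N$; we identify $f$ with that representative.

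The hard part is this last step: converting an averaged mean-oscillation bound into a pointwise H\"older modulus of continuity. The delicate points are the choice of the nested chain of cubes used for telescoping and the passage from the resulting almost-everywhere estimate to a genuine H\"older-continuous representative of $f$. The hypotheses enter exactly where expected: $\beta>0$ is what makes the sum over scales converge, and $\beta<1$ is what makes $\dot{\Lambda}_{\beta}$ the ordinary H\"older space so that the first-difference definition is in force.
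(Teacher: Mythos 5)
Your proof is correct. The paper offers no argument for this lemma --- it is cited verbatim from Paluszy\'nski --- so there is no in-paper proof to compare against, and what you give is the standard Campanato--Meyers argument: Jensen for the middle step, the first-difference definition for the easy inequality, and the concentric-cube telescope combined with the Lebesgue differentiation theorem for the converse, with $\beta>0$ securing convergence of the geometric series and $\beta<1$ securing the first-difference form of the seminorm. Your reading of the displayed ``$=$'' as a two-sided equivalence is also the right interpretation here, since this paper defines $\dot{\Lambda}_{\beta}$ by the finite-difference seminorm rather than by the Campanato functional. The only step worth making slightly more formal is the passage from the averaged bound to the H\"older representative: fix once and for all the full-measure set $E$ of Lebesgue points of $f$ (a single set, not depending on the cube $Q$); the telescope then gives $|f(x)-f(y)|\lesssim N\,|x-y|^{\beta}$ simultaneously for all $x,y\in E$, and since $E$ is dense this restriction extends uniquely to a $\beta$-H\"older function agreeing with $f$ a.e., which is exactly the identification you invoke.
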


\begin{lemma}
	\label{Lemma2}$\left( \text{see \cite{Paluszynski}}\right) $ Let $Q^{\ast
	}\subset Q$. If $f\in \dot{\Lambda}_{\beta }\left( {\mathbb{R}}^{n}\right)
	\left( 0<\beta <1\right) $, then%
	\begin{equation*}
		\left \vert f_{Q^{\ast }}-f_{Q}\right \vert \leq C\left \Vert f\right \Vert
		_{\dot{\Lambda}_{\beta }\left( {\mathbb{R}}^{n}\right) }\left \vert Q\right
		\vert ^{\frac{\beta }{n}}.
	\end{equation*}
\end{lemma}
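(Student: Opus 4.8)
The plan is to reduce the estimate to the elementary observation that, for $0<\beta<1$, the homogeneous Lipschitz space $\dot{\Lambda}_{\beta}({\mathbb{R}}^{n})$ coincides with the classical H\"{o}lder class: since $[\beta]+1=1$, the $\dot{\Lambda}_{\beta}$-norm reduces to $\sup_{x\neq y}\vert f(x)-f(y)\vert/\vert x-y\vert^{\beta}$, so that $\vert f(x)-f(y)\vert\le \Vert f\Vert_{\dot{\Lambda}_{\beta}}\vert x-y\vert^{\beta}$ for all $x,y\in{\mathbb{R}}^{n}$.

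First I would express the difference of the two means over $Q^{\ast}\subset Q$ as a single mean over the product domain,
\[
f_{Q^{\ast}}-f_{Q}=\frac{1}{\vert Q^{\ast}\vert\,\vert Q\vert}\int_{Q^{\ast}}\int_{Q}\bigl(f(u)-f(v)\bigr)\,du\,dv,
\]
which is immediate because averaging the constant $f_{Q^{\ast}}$ over $Q$ (resp.\ the constant $f_{Q}$ over $Q^{\ast}$) reproduces it. Then I would move the absolute value inside, bound $\vert f(u)-f(v)\vert\le \Vert f\Vert_{\dot{\Lambda}_{\beta}}\vert u-v\vert^{\beta}$, and use that for $u\in Q^{\ast}\subset Q$ and $v\in Q$ one has $\vert u-v\vert\le \operatorname{diam}Q=\sqrt{n}\,\vert Q\vert^{1/n}$. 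Hence the integrand is dominated by the constant $n^{\beta/2}\Vert f\Vert_{\dot{\Lambda}_{\beta}}\vert Q\vert^{\beta/n}$; integrating over $Q^{\ast}\times Q$ and cancelling $\vert Q^{\ast}\vert\,\vert Q\vert$ gives precisely $\vert f_{Q^{\ast}}-f_{Q}\vert\le C\Vert f\Vert_{\dot{\Lambda}_{\beta}}\vert Q\vert^{\beta/n}$ with $C=n^{\beta/2}$ depending only on $n$ and $\beta$.

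I do not anticipate any real obstacle; the only point requiring a little care is the normalization, namely keeping track that $\operatorname{diam}Q$ and $\vert Q\vert^{1/n}$ differ only by a dimensional constant, so that the final $C$ is admissible. Should one prefer to avoid passing through H\"{o}lder continuity, an equally effective route is a telescoping argument: choose a finite chain $Q^{\ast}=Q_{0}\subset Q_{1}\subset\cdots\subset Q_{N}=Q$ of cubes whose volumes increase geometrically, apply the estimate $\frac{1}{\vert Q_{i}\vert}\int_{Q_{i}}\vert f-f_{Q_{i+1}}\vert\le C\Vert f\Vert_{\dot{\Lambda}_{\beta}}\vert Q_{i+1}\vert^{\beta/n}$ (which follows from Lemma \ref{Lemma1*}) to each consecutive pair, and sum in $i$; the geometric decay of $\vert Q_{i}\vert^{\beta/n}$ forces the series to converge to a constant multiple of $\vert Q\vert^{\beta/n}$, so the length $N$ of the chain is irrelevant.
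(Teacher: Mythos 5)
Your main argument is correct and self-contained, whereas the paper does not actually supply a proof of this lemma at all -- it simply cites \cite{Paluszynski}. For $0<\beta<1$ one has $[\beta]+1=1$, so the definition in the paper indeed reduces the $\dot{\Lambda}_{\beta}$-norm to the pointwise H\"older seminorm $\sup_{x\ne y}\lvert f(x)-f(y)\rvert/\lvert x-y\rvert^{\beta}$, and then the double-averaging identity
$f_{Q^{\ast}}-f_{Q}=\tfrac{1}{\lvert Q^{\ast}\rvert\,\lvert Q\rvert}\int_{Q^{\ast}}\int_{Q}\bigl(f(u)-f(v)\bigr)\,dv\,du$
together with $\lvert u-v\rvert\le\operatorname{diam}Q=\sqrt{n}\,\lvert Q\rvert^{1/n}$ for $u\in Q^{\ast}\subset Q$, $v\in Q$, gives the bound with $C=n^{\beta/2}$ exactly as you say; no gap. (One small notational remark: write the inner differential to match the inner domain, i.e. $\int_{Q^{\ast}}\bigl(\int_{Q}(f(u)-f(v))\,dv\bigr)du$, so that the identity reads $f_{Q^{\ast}}-f_{Q}$ rather than its negative.) This is simpler than the oscillation-based route through Lemma~\ref{Lemma1*} that the paper implicitly has in mind via the citation; the direct H\"older bound buys you an explicit constant and no geometric chain construction, at the (mild) cost of using the pointwise characterization of $\dot{\Lambda}_{\beta}$ rather than only its mean-oscillation characterization. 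Your sketched telescoping alternative is also viable, but as written it glosses over the step of replacing the average of $\lvert f-f_{Q_{i+1}}\rvert$ over $Q_{i}$ by the one over $Q_{i+1}$ (which costs a factor $\lvert Q_{i+1}\rvert/\lvert Q_{i}\rvert$, harmless only because the ratio is bounded) and over the geometric construction of a nested chain of cubes from $Q^{\ast}$ up to $Q$; the first route is cleaner and I would keep it as the primary proof.
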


In the proof of our main results we need to obtain an local estimate of\ $%
\left \vert R_{m}\left( A;x,y\right) \right \vert $, similar to the Lemma %
\ref{Lemma1}. To do this, applying the above results, we obtain the
following:

\begin{lemma}
	Let $0<\beta <1$, $A\left( x\right) $ be a function defined on ${\mathbb{R}%
		^{n}}$ with $m$-th order derivatives on $L_{loc}^{q}\left( {\mathbb{R}^{n}}%
	\right) $ for $\left \vert \gamma \right \vert =m$ and some $q>n$. Then,%
	\begin{equation}
		\left \vert R_{m}\left( A;x,y\right) \right \vert \lesssim \sum
		\limits_{\left \vert \gamma \right \vert =m-1}\left \Vert D^{\gamma }A\right
		\Vert _{\dot{\Lambda}_{\beta }\left( {\mathbb{R}}^{n}\right) }\left \vert
		x-y\right \vert ^{m-1+\beta }.  \label{21}
	\end{equation}
\end{lemma}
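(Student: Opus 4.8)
The plan is to reduce $R_{m}(A;x,y)$ to a Taylor remainder of order $m-1$ attached to a modified function whose $(m-1)$-th order derivatives have mean value zero over the cube $\tilde{Q}$ of Lemma \ref{Lemma1}, so that Lemma \ref{Lemma1} (applied with $m$ replaced by $m-1$) combines with the $\dot{\Lambda}_{\beta}$-oscillation estimates in Lemmas \ref{Lemma1*} and \ref{Lemma2} to produce the extra factor $|x-y|^{\beta}$. The one genuinely useful observation is elementary: if $P$ is a polynomial of degree at most $m-1$, then $R_{m}(A-P;x,y)=R_{m}(A;x,y)$, because the degree-$(m-1)$ Taylor polynomial of $P$ at $y$ equals $P$ itself.

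Fix $x,y\in\mathbb{R}^{n}$ with $x\neq y$ and let $\tilde{Q}$ be the cube centered at $x$ of diameter $5\sqrt{n}\,|x-y|$ supplied by Lemma \ref{Lemma1}; then $y\in\tilde{Q}$ and $|\tilde{Q}|^{\beta/n}\approx|x-y|^{\beta}$. Set
\[
\tilde{A}(z)=A(z)-\sum\limits_{|\gamma|=m-1}\frac{1}{\gamma!}\,(D^{\gamma}A)_{\tilde{Q}}\,(z-x)^{\gamma}.
\]
As the subtracted polynomial has degree $m-1$, the observation above gives $R_{m}(A;x,y)=R_{m}(\tilde{A};x,y)$; moreover $D^{\gamma}\tilde{A}=D^{\gamma}A-(D^{\gamma}A)_{\tilde{Q}}$ for $|\gamma|=m-1$ (so these derivatives have zero average over $\tilde{Q}$), while $D^{\nu}\tilde{A}=D^{\nu}A$ for $|\nu|=m$. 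Peeling off the top degree of the Taylor polynomial,
\[
R_{m}(A;x,y)=R_{m-1}(\tilde{A};x,y)-\sum\limits_{|\gamma|=m-1}\frac{1}{\gamma!}\bigl(D^{\gamma}A(y)-(D^{\gamma}A)_{\tilde{Q}}\bigr)(x-y)^{\gamma}.
\]

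It remains to bound the two pieces. For the sum, $|(x-y)^{\gamma}|\le|x-y|^{m-1}$ when $|\gamma|=m-1$, and, since $y\in\tilde{Q}$, averaging the Lipschitz estimate $|D^{\gamma}A(y)-D^{\gamma}A(z)|\le\|D^{\gamma}A\|_{\dot{\Lambda}_{\beta}}|y-z|^{\beta}$ over $z\in\tilde{Q}$ (this is the content of Lemma \ref{Lemma2} in the limit of a shrinking cube about $y$) yields $|D^{\gamma}A(y)-(D^{\gamma}A)_{\tilde{Q}}|\lesssim\|D^{\gamma}A\|_{\dot{\Lambda}_{\beta}}|\tilde{Q}|^{\beta/n}\approx\|D^{\gamma}A\|_{\dot{\Lambda}_{\beta}}|x-y|^{\beta}$. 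For $R_{m-1}(\tilde{A};x,y)$, observe that for $|\gamma|=m-1$ the function $D^{\gamma}A$ has gradient in $L_{loc}^{q}$ with $q>n$, hence is locally H\"{o}lder continuous by Morrey's embedding and in particular lies in $L_{loc}^{q}$; thus Lemma \ref{Lemma1} applies with $m$ replaced by $m-1$ to $\tilde{A}$, on the very cube $\tilde{Q}$, giving
\[
|R_{m-1}(\tilde{A};x,y)|\le C|x-y|^{m-1}\sum\limits_{|\gamma|=m-1}\Bigl(\frac{1}{|\tilde{Q}|}\int_{\tilde{Q}}|D^{\gamma}A-(D^{\gamma}A)_{\tilde{Q}}|^{q}\Bigr)^{1/q}.
\]
By the $L^{q}$-form of Lemma \ref{Lemma1*} the last average is $\lesssim\|D^{\gamma}A\|_{\dot{\Lambda}_{\beta}}\,|\tilde{Q}|^{\beta/n}\approx\|D^{\gamma}A\|_{\dot{\Lambda}_{\beta}}|x-y|^{\beta}$. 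Combining the two estimates and summing over $|\gamma|=m-1$ gives \eqref{21}.

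The main obstacle is conceptual rather than computational: in the naive identity $R_{m}(A;x,y)=R_{m-1}(A;x,y)-\sum_{|\gamma|=m-1}\frac{1}{\gamma!}D^{\gamma}A(y)(x-y)^{\gamma}$ each summand is only of size $|x-y|^{m-1}$, and one has to realize that subtracting the degree-$(m-1)$ polynomial $P$ --- which changes neither $R_{m}$ nor the $\dot{\Lambda}_{\beta}$-seminorms of the top derivatives, but renders those derivatives mean-zero on $\tilde{Q}$ --- is exactly what turns this into a difference of two quantities each of size $|x-y|^{m-1+\beta}$. When $m=1$ no argument is needed: $R_{1}(A;x,y)=A(x)-A(y)$ and \eqref{21} is the defining estimate of $\dot{\Lambda}_{\beta}$; the reasoning above is only used for $m\ge 2$, consistently with the hypotheses of Theorem \ref{Theorem}.
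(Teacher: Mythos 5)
Your proof is correct and follows essentially the same route as the paper's: you subtract a degree-$(m-1)$ polynomial whose top coefficients are averages of $D^{\gamma}A$ over a cube, so that $R_{m}$ reduces to $R_{m-1}$ of a modified function with mean-zero top derivatives, and then invoke Lemmas \ref{Lemma1}, \ref{Lemma1*}, and \ref{Lemma2}. The only (cosmetic) simplification is that you take the averages directly over the cube $\tilde{Q}$ that Lemma \ref{Lemma1} produces, whereas the paper averages over a slightly smaller cube $Q$ of diameter $4\sqrt{n}|x-y|$ and must then compare $(D^{\gamma}A)_{Q}$ with $(D^{\gamma}A)_{\tilde{Q}}$ via $(D^{\gamma}A)_{5Q}$, a chain of two applications of Lemma \ref{Lemma2} that your choice renders unnecessary.
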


\begin{proof}
	For any $x\in {\mathbb{R}^{n}}$, let $Q\left( x,y\right) =Q$ be the cube
	centered at $x$ and having diameter $4\sqrt{n}\left \vert x-y\right \vert $.
	For fixed $x\in {\mathbb{R}^{n}}$, let%
	\begin{equation*}
		A_{Q}\left( y\right) =A\left( y\right) -\sum \limits_{\left \vert \gamma
			\right \vert =m-1}\frac{1}{\gamma !}\left( D^{\gamma }A\right) _{Q}y^{\gamma
		},
	\end{equation*}%
	where $\left( D^{\gamma }A\right) _{Q}$ is the average of $D^{\gamma }A$ on $%
	Q$. From the definitions of $R_{m}\left( A;x,y\right) $ and $A_{Q}$, it is
	easy to see that $R_{m}\left( A;x,y\right) =R_{m}\left( A_{Q};x,y\right) $
	and%
	\begin{equation*}
		R_{m}\left( A_{Q};x,y\right) =R_{m-1}\left( A_{Q};x,y\right) -\sum
		\limits_{\left \vert \gamma \right \vert =m-1}\frac{1}{\gamma !}D^{\gamma
		}A_{Q}\left( x\right) \left \vert x-y\right \vert ^{\gamma }\text{.}
	\end{equation*}%
	Then, 
	\begin{equation}
		\left \vert R_{m}\left( A;x,y\right) \right \vert \lesssim \left \vert
		R_{m-1}\left( A_{Q};x,y\right) \right \vert +\sum \limits_{\left \vert
			\gamma \right \vert =m-1}\frac{1}{\gamma !}\left \vert D^{\gamma
		}A_{Q}\left( x\right) \right \vert \left \vert x-y\right \vert ^{m-1}.
		\label{22}
	\end{equation}%
	Applying Lemmas \ref{Lemma1}, \ref{Lemma1*} and \ref{Lemma2}, for any $y$,
	we get%
	\begin{eqnarray}
		\left \vert R_{m-1}\left( A_{Q};x,y\right) \right \vert &\lesssim &\left
		\vert x-y\right \vert ^{m-1}\sum \limits_{\left \vert \gamma \right \vert
			=m-1}\left( \frac{1}{\left \vert \tilde{Q}\right \vert }\int \limits_{%
			\tilde{Q}}\left \vert D^{\gamma }A_{Q}\left( z\right) \right \vert
		^{q}dy\right) ^{\frac{1}{q}}  \notag \\
		&=&\left \vert x-y\right \vert ^{m-1}\sum \limits_{\left \vert \gamma
			\right \vert =m-1}\left( \frac{1}{\left \vert \tilde{Q}\right \vert }\int
		\limits_{\tilde{Q}}\left \vert D^{\gamma }A\left( z\right) -\left( D^{\gamma
		}A\right) _{Q}\right \vert ^{q}dy\right) ^{\frac{1}{q}}  \notag \\
		&\lesssim &\left \vert x-y\right \vert ^{m-1}\left[ 
		\begin{array}{c}
			\left( \frac{1}{\left \vert \tilde{Q}\right \vert }\int \limits_{\tilde{Q}%
			}\left \vert D^{\gamma }A\left( z\right) -\left( D^{\gamma }A\right) _{%
				\tilde{Q}}\right \vert ^{q}dy\right) ^{\frac{1}{q}} \\ 
			+\left \vert \left( D^{\gamma }A\right) _{\tilde{Q}}-\left( D^{\gamma
			}A\right) _{5Q}\right \vert +\left \vert \left( D^{\gamma }A\right)
			_{5Q}-\left( D^{\gamma }A\right) _{Q}\right \vert%
		\end{array}%
		\right]  \notag \\
		&\lesssim &\left \vert x-y\right \vert ^{m+\beta -1}\left \Vert D^{\gamma
		}A\right \Vert _{\dot{\Lambda}_{\beta }\left( {\mathbb{R}}^{n}\right) },
		\label{23}
	\end{eqnarray}%
	where $\tilde{Q}$ is the cube centered at $x$ and having diameter $5\sqrt{n}%
	\left \vert x-y\right \vert $ and $\tilde{Q}\subset 5Q$.
	
	On the other hand, applying Lemma \ref{Lemma2} again, we have 
	\begin{equation}
		\left \vert D^{\gamma }A_{Q}\left( x\right) \right \vert \lesssim \left
		\Vert D^{\gamma }A\right \Vert _{\dot{\Lambda}_{\beta }\left( {\mathbb{R}}%
			^{n}\right) }\left \vert Q\right \vert ^{\frac{\beta }{n}}\lesssim \left
		\Vert D^{\gamma }A\right \Vert _{\dot{\Lambda}_{\beta }\left( {\mathbb{R}}%
			^{n}\right) }\left \vert x-y\right \vert ^{\beta }.  \label{24}
	\end{equation}%
	Thus, combining inequalities (\ref{22})-(\ref{24}), we obtain (\ref{21}).
\end{proof}

\begin{lemma}
	\label{Lemma3}Let $\Omega \left( x\right) $, $\Delta _{k}$, $\Delta _{z}$ $%
	\left( k,z\in 
	\mathbb{Z}
	\right) $ be as stated above and we have
	
	$\left( i\right) $ if $x\in \Delta _{k}$, $k\leq z-3$, then 
	\begin{equation*}
		\left( \int \limits_{\Delta _{z}}\left \vert \Omega (x-y)\right \vert
		^{s}dy\right) ^{\frac{1}{s}}\lesssim 2^{\frac{zn}{s}}\left \Vert \Omega
		\right \Vert _{L^{s}(S^{n-1})},
	\end{equation*}
	
	$\left( ii\right) $ if $x\in \Delta _{k}$, $k\geq z+3$, then 
	\begin{equation*}
		\left( \int \limits_{\Delta _{z}}\left \vert \Omega (x-y)\right \vert
		^{s}dy\right) ^{\frac{1}{s}}\lesssim 2^{\frac{\left( z-k+kn\right) }{s}%
		}\left \Vert \Omega \right \Vert _{L^{s}(S^{n-1})}.
	\end{equation*}
\end{lemma}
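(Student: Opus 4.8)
The plan is to reduce both inequalities, via the change of variables $u=x-y$ together with the homogeneity of degree zero of $\Omega$, to elementary one–dimensional radial integrals over $S^{n-1}$. First I would record that $y\in\Delta_{z}$ means $2^{z-1}<|y|\le 2^{z}$ (and similarly for $x\in\Delta_{k}$), and that $\Omega(u)=\Omega(u/|u|)$ for $u\neq0$. The basic principle to isolate is this: writing $u=\rho\sigma$ with $\rho>0$, $\sigma\in S^{n-1}$ and $du=\rho^{n-1}\,d\rho\,d\sigma$, for any measurable set $A$ and any interval $I$ with $\{\rho>0:\rho\sigma\in A\}\subseteq I$ for every $\sigma$, one has
\[
\int_{A}|\Omega(u)|^{s}\,du=\int_{S^{n-1}}|\Omega(\sigma)|^{s}\Bigl(\int_{\{\rho:\rho\sigma\in A\}}\rho^{n-1}\,d\rho\Bigr)d\sigma\le\Bigl(\int_{I}\rho^{n-1}\,d\rho\Bigr)\|\Omega\|_{L^{s}(S^{n-1})}^{s}.
\]
Since $u=x-y$ is an affine bijection with Jacobian $1$, $\int_{\Delta_{z}}|\Omega(x-y)|^{s}\,dy=\int_{x-\Delta_{z}}|\Omega(u)|^{s}\,du$, so the whole task is to locate $x-\Delta_{z}$ precisely enough to choose the right $I$.

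For $(i)$: if $x\in\Delta_{k}$ with $k\le z-3$, then $|x|\le 2^{z-3}$, hence every $y\in\Delta_{z}$ satisfies $|x-y|\le|x|+|y|\le 2^{z-3}+2^{z}<2^{z+1}$; thus $x-\Delta_{z}\subset B(0,2^{z+1})$. Applying the principle with $I=(0,2^{z+1})$ gives $\int_{\Delta_{z}}|\Omega(x-y)|^{s}\,dy\lesssim 2^{zn}\|\Omega\|_{L^{s}(S^{n-1})}^{s}$, and taking $s$-th roots yields $(i)$.

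For $(ii)$: if $x\in\Delta_{k}$ with $k\ge z+3$, then $2^{k-1}<|x|\le 2^{k}$ while $|y|\le 2^{z}\le 2^{k-3}$ for $y\in\Delta_{z}$. The point I would emphasise is that under $u=x-y$ the reverse triangle inequality gives $\bigl||u|-|x|\bigr|\le|y|\le 2^{z}$, so $|u|$ stays in the interval $I=[\,|x|-2^{z},\,|x|+2^{z}\,]$, which has length $2^{z+1}$ and lies inside $(2^{k-2},2^{k+1})$; in other words $x-\Delta_{z}$ is a \emph{thin} annular shell of radius $\approx 2^{k}$ and width $\approx 2^{z}$. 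Therefore $\int_{I}\rho^{n-1}\,d\rho\le|I|\,(2^{k+1})^{n-1}\lesssim 2^{z}\,2^{k(n-1)}$, and the principle above gives
\[
\int_{\Delta_{z}}|\Omega(x-y)|^{s}\,dy\lesssim 2^{z}\,2^{k(n-1)}\|\Omega\|_{L^{s}(S^{n-1})}^{s}=2^{\,z-k+kn}\|\Omega\|_{L^{s}(S^{n-1})}^{s};
\]
taking $s$-th roots yields $(ii)$.

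The main (and essentially the only) obstacle is the geometric observation in $(ii)$: recognising that although $x-\Delta_{z}$ sits at scale $2^{k}$, it is only a shell of thickness $\approx 2^{z}$, which is precisely what produces the gain $2^{(z-k)/s}$ over the crude bound $2^{kn/s}$. The separation hypotheses $k\le z-3$ and $k\ge z+3$ are exactly what keep this shell away from the origin and comfortably inside $\{2^{k-2}\le|u|\le 2^{k+1}\}$, so the polar-coordinate computation is unproblematic (and, in $(i)$, the weight $\rho^{n-1}$ makes even a possible singularity of $\Omega$ at the origin harmless). Everything else — the affine change of variables, the passage to polar coordinates, the use of homogeneity of degree zero, and all numerical constants, which are absorbed into $\lesssim$ — is routine.
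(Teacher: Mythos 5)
Your proof is correct, and it takes essentially the same approach the paper gestures at: the paper records $|x-y|\approx\max\{2^{z},2^{k}\}$, reduces to an integral over a ball, and then defers the ``direct calculation'' to a reference, while you carry out that calculation explicitly via the change of variables $u=x-y$, polar coordinates, and homogeneity of $\Omega$. The key geometric observation you make precise in case $(ii)$ --- that $x-\Delta_{z}$ is a thin annular shell of radius $\approx 2^{k}$ and radial thickness $\approx 2^{z}$, so $\int_{I}\rho^{n-1}\,d\rho\lesssim 2^{z}\,2^{k(n-1)}$ --- is exactly what produces the exponent $\frac{z-k+kn}{s}$, and your use of the separation $|k-z|\ge 3$ to place the shell inside $(2^{k-2},2^{k+1})$ is the right way to justify the constants absorbed into $\lesssim$.
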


\begin{proof}
	When $x\in \Delta _{k}$, $y\in \Delta _{z}$, $\left \vert k-z\right \vert
	\geq 3$, then we have $\left \vert x\right \vert \approx 2^{k}$, $%
	\left
	\vert y\right \vert \approx 2^{z}$, $\left \vert x-y\right \vert
	\approx \max \left \{ 2^{z},2^{k}\right \} $ and%
	\begin{equation*}
		\left( \int \limits_{\Delta _{z}}\left \vert \Omega (x-y)\right \vert
		^{s}dy\right) ^{\frac{1}{s}}\leq \left( \int \limits_{x+B_{z}}\left \vert
		\Omega (x-y)\right \vert ^{s}dy\right) ^{\frac{1}{s}}.
	\end{equation*}%
	Thus, direct calculation yields $\left( i\right) $ and $\left( ii\right) $
	(see \cite{Wu} for more details).
\end{proof}

\section{Theorem Proofs}

\textbf{Proof of Theorem \ref{Theorem}.}

\begin{proof}
	Let $f\in \dot{K}_{p_{1}(\cdot )}^{\alpha ,q_{1}}(\mathbb{R}^{n})$. Taking $%
	f_{z}:=f\chi _{z}$ for each $z\in 
	\mathbb{Z}
	$, we write 
	\begin{equation*}
		f\left( x\right) =\sum \limits_{z=-\infty }^{\infty }f\left( x\right) \chi
		_{z}\left( x\right) =\sum \limits_{z=-\infty }^{\infty }f_{z}\left(
		x\right) .
	\end{equation*}%
	Owing to $0<\frac{q_{1}}{q_{2}}\leq 1$, then the Jensen inequality is:%
	\begin{equation}
		\left( \sum \limits_{j=-\infty }^{\infty }\left \vert c_{j}\right \vert
		\right) ^{\frac{q_{1}}{q_{2}}}\leq \sum \limits_{j=-\infty }^{\infty }\left
		\vert c_{j}\right \vert ^{\frac{q_{1}}{q_{2}}},c_{j}\in 
		\mathbb{R}
		,j\in 
		\mathbb{Z}
		.  \label{25}
	\end{equation}%
	By (\ref{25}), we have 
	\begin{eqnarray}
		\left \Vert I_{\Omega ,\phi }^{A,m}f\right \Vert _{\dot{K}_{p_{2}(\cdot
				)}^{\alpha ,q_{2}}(\mathbb{R}^{n})}^{q_{1}} &=&\left( \sum
		\limits_{k=-\infty }^{\infty }2^{k\alpha q_{2}}\Vert \left( I_{\Omega ,\phi
		}^{A,m}f\right) \chi _{k}\Vert _{L^{p_{2}(\cdot )}}^{q_{2}}\right) ^{\frac{%
				q_{1}}{q_{2}}}  \notag \\
		&\lesssim &\sum \limits_{k=-\infty }^{\infty }2^{k\alpha q_{1}}\Vert \left(
		I_{\Omega ,\phi }^{A,m}f\right) \chi _{k}\Vert _{L^{p_{2}(\cdot )}}^{q_{1}} 
		\notag \\
		&\lesssim &\sum \limits_{k=-\infty }^{\infty }2^{k\alpha q_{1}}\left( \sum
		\limits_{z=-\infty }^{k-3}\Vert \left( I_{\Omega ,\phi }^{A,m}f_{z}\right)
		\chi _{k}\Vert _{L^{p_{2}(\cdot )}}\right) ^{q_{1}}  \notag \\
		&&+\sum \limits_{k=-\infty }^{\infty }2^{k\alpha q_{1}}\left( \sum
		\limits_{z=k-2}^{k+2}\Vert \left( I_{\Omega ,\phi }^{A,m}f_{z}\right) \chi
		_{k}\Vert _{L^{p_{2}(\cdot )}}\right) ^{q_{1}}  \notag \\
		&&+\sum \limits_{k=-\infty }^{\infty }2^{k\alpha q_{1}}\left( \sum
		\limits_{z=k+3}^{\infty }\Vert \left( I_{\Omega ,\phi }^{A,m}f_{z}\right)
		\chi _{k}\Vert _{L^{p_{2}(\cdot )}}\right) ^{q_{1}}  \notag \\
		&=&:X+Y+Z.  \label{101}
	\end{eqnarray}
	
	We first estimate $X$.
	
	Let $z\in \mathbb{%
		\mathbb{Z}
	}$, $B_{z}:=\{x\in \mathbb{R}^{n}:\left \vert x\right \vert \leq 2^{z}\}$, $%
	\Delta _{z}:=B_{z}\setminus B_{z-1}$. For any $k,z\in 
	\mathbb{Z}
	$ and $k\geq z+3$, using (\ref{3}) and (\ref{21}), we have%
	\begin{eqnarray*}
		\left \vert \left( I_{\Omega ,\phi }^{A,m}f_{z}\right) \chi _{k}\right \vert
		&\lesssim &\int \limits_{\Delta _{z}}\frac{\left \vert \Omega (x-y)\right
			\vert \left \vert f_{z}(y)\right \vert }{|x-y|^{n-\phi +m-1}}\left \vert
		R_{m}\left( A;x,y\right) \right \vert dy\cdot \chi _{k} \\
		&\lesssim &2^{k\left( \phi +\beta -n\right) }\sum \limits_{\left \vert
			\gamma \right \vert =m-1}\left \Vert D^{\gamma }A\right \Vert _{\dot{\Lambda}%
			_{\beta }\left( {\mathbb{R}}^{n}\right) }\int \limits_{%
			\mathbb{R}
			^{n}}\left \vert \Omega (x-y)\right \vert \left \vert f_{z}(y)\right \vert
		dy\cdot \chi _{k} \\
		&\lesssim &2^{k\left( \phi +\beta -n\right) }\sum \limits_{\left \vert
			\gamma \right \vert =m-1}\left \Vert D^{\gamma }A\right \Vert _{\dot{\Lambda}%
			_{\beta }\left( {\mathbb{R}}^{n}\right) }\left \Vert f_{z}\right \Vert
		_{L^{p_{1}(\cdot )}}\left \Vert \Omega (x-y)\chi _{z}\right \Vert
		_{L^{p_{^{1}}^{\prime }\left( \cdot \right) }}\cdot \chi _{k}
	\end{eqnarray*}%
	Since $\left( p_{1}^{\prime }\right) _{+}<s$ and $\frac{1}{s}+\frac{1}{%
		\tilde{p}_{1}^{\prime }\left( \cdot \right) }=\frac{1}{p_{1}^{\prime }\left(
		\cdot \right) }$, then using (\ref{31}), (\ref{32}) and Lemma \ref{Lemma3}
	give the following%
	\begin{eqnarray*}
		\left \vert \left( I_{\Omega ,\phi }^{A,m}f_{z}\right) \chi _{k}\right \vert
		&\lesssim &2^{k\left( \phi +\beta -n\right) }\sum \limits_{\left \vert
			\gamma \right \vert =m-1}\left \Vert D^{\gamma }A\right \Vert _{\dot{\Lambda}%
			_{\beta }\left( {\mathbb{R}}^{n}\right) }\left \Vert f_{z}\right \Vert
		_{L^{p_{1}(\cdot )}}\left \Vert \Omega (x-y)\chi _{z}\right \Vert
		_{L^{s}}\left \Vert \chi _{z}\right \Vert _{L^{\tilde{p}_{1}^{\prime }\left(
				\cdot \right) }}\cdot \chi _{k} \\
		&\lesssim &2^{k\left( \phi +\beta -n\right) }2^{\frac{\left( z-k+kn\right) }{%
				s}}\sum \limits_{\left \vert \gamma \right \vert =m-1}\left \Vert D^{\gamma
		}A\right \Vert _{\dot{\Lambda}_{\beta }\left( {\mathbb{R}}^{n}\right) }\left
		\Vert f_{z}\right \Vert _{L^{p_{1}(\cdot )}}\left \Vert \chi _{B_{z}}\right
		\Vert _{L^{\tilde{p}_{1}^{\prime }\left( \cdot \right) }}\cdot \chi _{k}
	\end{eqnarray*}%
	and%
	\begin{equation}
		\left \Vert \left( I_{\Omega ,\phi }^{A,m}f_{z}\right) \chi _{k}\right \Vert
		_{L^{p_{2}\left( \cdot \right) }}\lesssim 2^{k\left( \phi +\beta -n\right)
		}2^{\frac{\left( z-k+kn\right) }{s}}\sum \limits_{\left \vert \gamma \right
			\vert =m-1}\left \Vert D^{\gamma }A\right \Vert _{\dot{\Lambda}_{\beta
			}\left( {\mathbb{R}}^{n}\right) }\left \Vert f_{z}\right \Vert
		_{L^{p_{1}(\cdot )}}\left \Vert \chi _{B_{z}}\right \Vert _{L^{\tilde{p}%
				_{1}^{\prime }\left( \cdot \right) }}\left \Vert \chi _{k}\right \Vert
		_{L^{p_{2}\left( \cdot \right) }}.  \label{26}
	\end{equation}%
	According to (\ref{100}),
	
	$1-$ When $\left \vert B_{z}\right \vert \leq 2^{n}$ and $x_{z}\in B_{z}$,
	we have%
	\begin{equation*}
		\left \Vert \chi _{B_{z}}\right \Vert _{L^{\tilde{p}_{1}^{\prime }\left(
				\cdot \right) }}\approx \left \vert B_{z}\right \vert ^{\frac{1}{\tilde{p}%
				_{1}^{\prime }\left( x_{z}\right) }}\approx \left \Vert \chi _{B_{z}}\right
		\Vert _{L^{p_{^{1}}^{\prime }\left( \cdot \right) }}\left \vert B_{z}\right
		\vert ^{-\frac{1}{s}}.
	\end{equation*}
	
	$2-$ When $\left \vert B_{z}\right \vert \geq 1$, we have%
	\begin{equation*}
		\left \Vert \chi _{B_{z}}\right \Vert _{L^{\tilde{p}_{1}^{\prime }\left(
				\cdot \right) }}\approx \left \vert B_{z}\right \vert ^{\frac{1}{\tilde{p}%
				_{1}^{\prime }\left( \infty \right) }}\approx \left \Vert \chi
		_{B_{z}}\right \Vert _{L^{p_{^{1}}^{\prime }\left( \cdot \right) }}\left
		\vert B_{z}\right \vert ^{-\frac{1}{s}}.
	\end{equation*}%
	So, we get 
	\begin{equation}
		\left \Vert \chi _{B_{z}}\right \Vert _{L^{\tilde{p}_{1}^{\prime }\left(
				\cdot \right) }}\approx \left \Vert \chi _{B_{z}}\right \Vert
		_{L^{p_{^{1}}^{\prime }\left( \cdot \right) }}\left \vert B_{z}\right \vert
		^{-\frac{1}{s}}.  \label{27}
	\end{equation}%
	Thus, using inequality (\ref{27}) in (\ref{26}), we get%
	\begin{equation}
		\left \Vert \left( I_{\Omega ,\phi }^{A,m}f_{z}\right) \chi _{k}\right \Vert
		_{L^{p_{2}\left( \cdot \right) }}\lesssim 2^{k\left( \phi +\beta -n\right)
		}2^{\frac{\left( k-z\right) \left( n-1\right) }{s}}\sum \limits_{\left
			\vert \gamma \right \vert =m-1}\left \Vert D^{\gamma }A\right \Vert _{\dot{%
				\Lambda}_{\beta }\left( {\mathbb{R}}^{n}\right) }\left \Vert f_{z}\right
		\Vert _{L^{p_{1}(\cdot )}}\left \Vert \chi _{B_{z}}\right \Vert
		_{L^{p_{^{1}}^{\prime }\left( \cdot \right) }}\left \Vert \chi _{k}\right
		\Vert _{L^{p_{2}\left( \cdot \right) }}.  \label{27*}
	\end{equation}%
	For the establishment of the boundedness of the generalized commutators, we
	use the fundamental properties of the Riesz potential, which is formally
	defined as%
	\begin{equation*}
		I_{\phi }f(x)=\int \limits_{%
			\mathbb{R}
			^{n}}\frac{f(y)}{|x-y|^{n-\phi }}dy\qquad 0<\phi <n.
	\end{equation*}%
	Since%
	\begin{equation*}
		2^{\phi z}\chi _{B_{z}}\left( x\right) \lesssim \int \limits_{B_{z}}\frac{dy%
		}{|x-y|^{n-\phi }}\cdot \chi _{B_{z}}\left( x\right) \leq I_{\phi }\left(
		\chi _{B_{z}}\right) \left( x\right) ,
	\end{equation*}%
	then%
	\begin{equation}
		2^{z\left( \phi +\beta \right) }\chi _{B_{z}}\lesssim I_{\phi +\beta }\left(
		\chi _{B_{z}}\right) \left( x\right) \chi _{B_{z}}\leq I_{\phi +\beta
		}\left( \chi _{B_{z}}\right) \left( x\right)  \label{28}
	\end{equation}%
	(see \cite{Gurbuz1} p. 6). On the other hand, using (\ref{5}) and (\ref{32}%
	), we have%
	\begin{eqnarray}
		2^{k\left( \phi +\beta -n\right) }\left \Vert \chi _{B_{z}}\right \Vert
		_{L^{p_{^{1}}^{\prime }\left( \cdot \right) }}\left \Vert \chi
		_{B_{k}}\right \Vert _{L^{p_{2}\left( \cdot \right) }} &\lesssim &2^{k\left(
			\phi +\beta \right) }\left \Vert \chi _{B_{z}}\right \Vert
		_{L^{p_{^{1}}^{\prime }\left( \cdot \right) }}\left \Vert \chi
		_{B_{k}}\right \Vert _{L^{p_{2}^{\prime }\left( \cdot \right) }}^{-1}  \notag
		\\
		&\lesssim &2^{k\left( \phi +\beta \right) }\left \Vert \chi _{B_{z}}\right
		\Vert _{L^{p_{^{1}}^{\prime }\left( \cdot \right) }}\left \Vert \chi
		_{B_{z}}\right \Vert _{L^{p_{2}^{\prime }\left( \cdot \right)
		}}^{-1}2^{n\delta _{1}\left( z-k\right) }.  \label{29}
	\end{eqnarray}%
	Next, applying (\ref{3}), (\ref{5}), (\ref{28}) and from $\left(
	L^{p_{1}\left( \cdot \right) },L^{p_{2}\left( \cdot \right) }\right) $%
	-boundedness of\ $I_{\phi +\beta }$\ (see (3.5) in \cite{Gurbuz1}), we know
	that%
	\begin{eqnarray}
		\left \Vert \chi _{B_{z}}\right \Vert _{L^{p_{2}^{\prime }\left( \cdot
				\right) }}^{-1} &\lesssim &2^{-nz}\left \Vert \chi _{B_{z}}\right \Vert
		_{L^{p_{2}\left( \cdot \right) }}  \notag \\
		&\lesssim &2^{-nz}2^{-z\left( \phi +\beta \right) }\left \Vert I_{\phi
			+\beta }\left( \chi _{B_{z}}\right) \right \Vert _{L^{p_{2}\left( \cdot
				\right) }}  \notag \\
		&\lesssim &2^{-nz}2^{-z\left( \phi +\beta \right) }\left \Vert \chi
		_{B_{z}}\right \Vert _{L^{p_{1}\left( \cdot \right) }}  \notag \\
		&\lesssim &2^{-z\left( \phi +\beta \right) }\left \Vert \chi _{B_{z}}\right
		\Vert _{L^{p_{^{1}}^{\prime }\left( \cdot \right) }}^{-1}.  \label{30}
	\end{eqnarray}%
	Thus, using inequalities (\ref{29}) and (\ref{30}) in (\ref{27*}), we get%
	\begin{equation}
		\left \Vert \left( I_{\Omega ,\phi }^{A,m}f_{z}\right) \chi _{k}\right \Vert
		_{L^{p_{2}\left( \cdot \right) }}\lesssim 2^{\left( k-z\right) \left( \phi
			+\beta -n\delta _{1}+\frac{n-1}{s}\right) }\sum \limits_{\left \vert \gamma
			\right \vert =m-1}\left \Vert D^{\gamma }A\right \Vert _{\dot{\Lambda}%
			_{\beta }\left( {\mathbb{R}}^{n}\right) }\left \Vert f_{z}\right \Vert
		_{L^{p_{1}(\cdot )}}.  \label{37}
	\end{equation}%
	Thus, by virtue of (\ref{37}) and remark that $\alpha <$ $n\delta
	_{1}-\left( \phi +\beta +\frac{n-1}{s}\right) $,%
	\begin{eqnarray}
		X &=&\sum \limits_{k=-\infty }^{\infty }2^{k\alpha q_{1}}\left( \sum
		\limits_{z=-\infty }^{k-3}\Vert \left( I_{\Omega ,\phi }^{A,m}f_{z}\right)
		\chi _{k}\Vert _{L^{p_{2}(\cdot )}}\right) ^{q_{1}}  \notag \\
		&\lesssim &\sum \limits_{k=-\infty }^{\infty }2^{k\alpha q_{1}}\left( \sum
		\limits_{z=-\infty }^{k-3}2^{\left( k-z\right) \left( \phi +\beta -n\delta
			_{1}+\frac{n-1}{s}\right) }\sum \limits_{\left \vert \gamma \right \vert
			=m-1}\left \Vert D^{\gamma }A\right \Vert _{\dot{\Lambda}_{\beta }\left( {%
				\mathbb{R}}^{n}\right) }\left \Vert f_{z}\right \Vert _{L^{p_{1}(\cdot
				)}}\right) ^{q_{1}}  \notag \\
		&\lesssim &\sum \limits_{\left \vert \gamma \right \vert =m-1}\left \Vert
		D^{\gamma }A\right \Vert _{\dot{\Lambda}_{\beta }\left( {\mathbb{R}}%
			^{n}\right) }^{q_{1}}\sum \limits_{k=-\infty }^{\infty }\left( \sum
		\limits_{z=-\infty }^{k-3}2^{\alpha z}\left \Vert f_{z}\right \Vert
		_{L^{p_{1}(\cdot )}}2^{\left( k-z\right) \left( \phi +\beta -n\delta
			_{1}+\alpha +\frac{n-1}{s}\right) }\right) ^{q_{1}}.  \notag
	\end{eqnarray}%
	To continue estimating $X$, we split the problem into the following two
	cases:
	
	\textbf{Case 1 }$\left( 0<q_{1}\leq 1\right) .$
	
	Using (\ref{25}) and substituting $q_{1}$ for $\frac{q_{1}}{q_{2}}$, we
	obtain%
	\begin{eqnarray*}
		X &\lesssim &\sum \limits_{\left \vert \gamma \right \vert =m-1}\left \Vert
		D^{\gamma }A\right \Vert _{\dot{\Lambda}_{\beta }\left( {\mathbb{R}}%
			^{n}\right) }^{q_{1}}\sum \limits_{k=-\infty }^{\infty }\sum
		\limits_{z=-\infty }^{k-3}2^{\alpha zq_{1}}\left \Vert f_{z}\right \Vert
		_{L^{p_{1}(\cdot )}}^{q_{1}}2^{\left( k-z\right) \left( \phi +\beta -n\delta
			_{1}+\alpha +\frac{n-1}{s}\right) q_{1}} \\
		&\lesssim &\sum \limits_{\left \vert \gamma \right \vert =m-1}\left \Vert
		D^{\gamma }A\right \Vert _{\dot{\Lambda}_{\beta }\left( {\mathbb{R}}%
			^{n}\right) }^{q_{1}}\left \Vert f\right \Vert _{\dot{K}_{p_{1}(\cdot
				)}^{\alpha ,q_{1}}(\mathbb{R}^{n})}^{q_{1}}.
	\end{eqnarray*}%
	\textbf{Case 2 }$\left( 1<q_{1}<\infty \right) .$
	
	Let $\frac{1}{q_{1}}+\frac{1}{q_{1}^{\prime }}=1$. By (\ref{3}), we get%
	\begin{eqnarray*}
		X &\lesssim &\sum \limits_{\left \vert \gamma \right \vert =m-1}\left \Vert
		D^{\gamma }A\right \Vert _{\dot{\Lambda}_{\beta }\left( {\mathbb{R}}%
			^{n}\right) }^{q_{1}}\sum \limits_{k=-\infty }^{\infty }\sum
		\limits_{z=-\infty }^{k-3}2^{\alpha zq_{1}}\left \Vert f_{z}\right \Vert
		_{L^{p_{1}(\cdot )}}^{q_{1}}2^{\left( k-z\right) \left( \phi +\beta -n\delta
			_{1}+\alpha +\frac{n-1}{s}\right) \frac{q_{1}}{2}} \\
		&&\times \left( \sum \limits_{z=-\infty }^{k-3}2^{\left( k-z\right) \left(
			\phi +\beta -n\delta _{1}+\alpha +\frac{n-1}{s}\right) \frac{q_{1}^{\prime }%
			}{2}}\right) ^{\frac{q_{1}}{q_{1}^{\prime }}}
	\end{eqnarray*}%
	\begin{eqnarray*}
		&\lesssim &\sum \limits_{\left \vert \gamma \right \vert =m-1}\left \Vert
		D^{\gamma }A\right \Vert _{\dot{\Lambda}_{\beta }\left( {\mathbb{R}}%
			^{n}\right) }^{q_{1}}\sum \limits_{z=-\infty }^{\infty }2^{\alpha
			zq_{1}}\left \Vert f_{z}\right \Vert _{L^{p_{1}(\cdot )}}^{q_{1}}\sum
		\limits_{k=z+3}^{\infty }2^{\left( k-z\right) \left( \phi +\beta -n\delta
			_{1}+\alpha +\frac{n-1}{s}\right) \frac{q_{1}}{2}} \\
		&\lesssim &\sum \limits_{\left \vert \gamma \right \vert =m-1}\left \Vert
		D^{\gamma }A\right \Vert _{\dot{\Lambda}_{\beta }\left( {\mathbb{R}}%
			^{n}\right) }^{q_{1}}\left \Vert f\right \Vert _{\dot{K}_{p_{1}(\cdot
				)}^{\alpha ,q_{1}}(\mathbb{R}^{n})}^{q_{1}}.
	\end{eqnarray*}%
	Next, we estimate $Y$. When $D^{\gamma }A\in \dot{\Lambda}_{\beta }\left( {%
		\mathbb{R}}^{n}\right) $, from $\left( L^{p_{1}\left( \cdot \right)
	},L^{p_{2}\left( \cdot \right) }\right) $-boundedness of the $I_{\Omega
		,\phi }^{A,m}$ (see Theorem 5 in \cite{Wu0}) and (\ref{25}), we have%
	\begin{eqnarray*}
		Y &=&\sum \limits_{k=-\infty }^{\infty }2^{k\alpha q_{1}}\left( \sum
		\limits_{z=k-2}^{k+2}\Vert \left( I_{\Omega ,\phi }^{A,m}f_{z}\right) \chi
		_{k}\Vert _{L^{p_{2}(\cdot )}}\right) ^{q_{1}} \\
		&\lesssim &\sum \limits_{\left \vert \gamma \right \vert =m-1}\left \Vert
		D^{\gamma }A\right \Vert _{\dot{\Lambda}_{\beta }\left( {\mathbb{R}}%
			^{n}\right) }^{q_{1}}\sum \limits_{k=-\infty }^{\infty }2^{k\alpha
			q_{1}}\left( \sum \limits_{z=k-2}^{k+2}\Vert f_{z}\chi _{k}\Vert
		_{L^{p_{2}(\cdot )}}\right) ^{q_{1}} \\
		&\lesssim &\sum \limits_{\left \vert \gamma \right \vert =m-1}\left \Vert
		D^{\gamma }A\right \Vert _{\dot{\Lambda}_{\beta }\left( {\mathbb{R}}%
			^{n}\right) }^{q_{1}}\left[ \sum \limits_{k=-\infty }^{\infty }2^{k\alpha
			q_{1}}\Vert f\chi _{k}\Vert _{L^{p_{2}(\cdot )}}^{q_{1}}\right] \\
		&=&\sum \limits_{\left \vert \gamma \right \vert =m-1}\left \Vert D^{\gamma
		}A\right \Vert _{\dot{\Lambda}_{\beta }\left( {\mathbb{R}}^{n}\right)
		}^{q_{1}}\left \Vert f\right \Vert _{\dot{K}_{p_{1}(\cdot )}^{\alpha ,q_{1}}(%
			\mathbb{R}^{n})}^{q_{1}}.
	\end{eqnarray*}%
	Finally, we estimate $Z$. For any $k,z\in 
	\mathbb{Z}
	$ and $z\geq k+3$, from the process proving (\ref{27*}), it is easy to see
	that%
	\begin{equation*}
		\left \Vert \left( I_{\Omega ,\phi }^{A,m}f_{z}\right) \chi _{k}\right \Vert
		_{L^{p_{2}\left( \cdot \right) }}\lesssim 2^{z\left( \phi +\beta -n\right)
		}\sum \limits_{\left \vert \gamma \right \vert =m-1}\left \Vert D^{\gamma
		}A\right \Vert _{\dot{\Lambda}_{\beta }\left( {\mathbb{R}}^{n}\right) }\left
		\Vert f_{z}\right \Vert _{L^{p_{1}(\cdot )}}\left \Vert \chi _{B_{z}}\right
		\Vert _{L^{p_{^{1}}^{\prime }\left( \cdot \right) }}\left \Vert \chi
		_{B_{k}}\right \Vert _{L^{p_{2}\left( \cdot \right) }}.
	\end{equation*}%
	Note that we do not go into the details of the proof process here, as the
	proofs are similar to each other. On the other hand, using (\ref{1}) and (%
	\ref{5}), an estimate similar to (\ref{29}) yields%
	\begin{equation}
		2^{z\left( \phi +\beta -n\right) }\left \Vert \chi _{B_{z}}\right \Vert
		_{L^{p_{^{1}}^{\prime }\left( \cdot \right) }}\left \Vert \chi
		_{B_{k}}\right \Vert _{L^{p_{2}\left( \cdot \right) }}\lesssim 2^{z\left(
			\phi +\beta \right) }\left \Vert \chi _{B_{k}}\right \Vert
		_{L^{p_{^{1}}\left( \cdot \right) }}\left \Vert \chi _{B_{k}}\right \Vert
		_{L^{p_{2}\left( \cdot \right) }}^{-1}2^{n\delta _{2}\left( z-k\right) }.
		\label{40}
	\end{equation}%
	Next, we know that%
	\begin{equation}
		2^{k\left( \phi +\beta \right) }\chi _{B_{k}}\lesssim I_{\phi +\beta }\left(
		\chi _{B_{k}}\right) \left( x\right) \chi _{B_{z}}\leq I_{\phi +\beta
		}\left( \chi _{B_{k}}\right) \left( x\right) .  \label{41}
	\end{equation}%
	Moreover, similar to the estimation of (\ref{30}), we get%
	\begin{equation}
		\left \Vert \chi _{B_{k}}\right \Vert _{L^{p_{1}\left( \cdot \right)
		}}^{-1}\lesssim 2^{-nk}\left \Vert \chi _{B_{k}}\right \Vert
		_{L^{p_{1}^{\prime }\left( \cdot \right) }}\lesssim 2^{-k\left( \phi +\beta
			\right) }\left \Vert \chi _{B_{k}}\right \Vert _{L^{p_{^{2}}\left( \cdot
				\right) }}^{-1}.  \label{42}
	\end{equation}%
	Hence, combining (\ref{40})-(\ref{42}), we obtain%
	\begin{equation}
		\left \Vert \left( I_{\Omega ,\phi }^{A,m}f_{z}\right) \chi _{k}\right \Vert
		_{L^{p_{2}\left( \cdot \right) }}\lesssim 2^{\left( z-k\right) \left( \phi
			+\beta +n\delta _{2}-\alpha \right) }\sum \limits_{\left \vert \gamma
			\right \vert =m-1}\left \Vert D^{\gamma }A\right \Vert _{\dot{\Lambda}%
			_{\beta }\left( {\mathbb{R}}^{n}\right) }\left \Vert f_{z}\right \Vert
		_{L^{p_{1}(\cdot )}}.  \label{43}
	\end{equation}%
	Thus, by (\ref{43}) and the assumption $\phi +\beta +n\delta _{2}<\alpha $,
	we conclude that \ 
	\begin{eqnarray}
		Z &=&\sum \limits_{k=-\infty }^{\infty }2^{k\alpha q_{1}}\left( \sum
		\limits_{z=k+3}^{\infty }\Vert \left( I_{\Omega ,\phi }^{A,m}f_{z}\right)
		\chi _{k}\Vert _{L^{p_{2}(\cdot )}}\right) ^{q_{1}}  \notag \\
		&\lesssim &\sum \limits_{k=-\infty }^{\infty }2^{k\alpha q_{1}}\left( \sum
		\limits_{z=k+3}^{\infty }2^{\left( z-k\right) \left( \phi +\beta +n\delta
			_{2}-\alpha \right) }\sum \limits_{\left \vert \gamma \right \vert
			=m-1}\left \Vert D^{\gamma }A\right \Vert _{\dot{\Lambda}_{\beta }\left( {%
				\mathbb{R}}^{n}\right) }\left \Vert f_{z}\right \Vert _{L^{p_{1}(\cdot
				)}}\right) ^{q_{1}}  \notag \\
		&\lesssim &\sum \limits_{\left \vert \gamma \right \vert =m-1}\left \Vert
		D^{\gamma }A\right \Vert _{\dot{\Lambda}_{\beta }\left( {\mathbb{R}}%
			^{n}\right) }^{q_{1}}\sum \limits_{k=-\infty }^{\infty }\left( \sum
		\limits_{z=k+3}^{\infty }2^{\alpha z}\left \Vert f_{z}\right \Vert
		_{L^{p_{1}(\cdot )}}2^{\left( z-k\right) \left( \phi +\beta +n\delta
			_{2}-\alpha \right) }\right) ^{q_{1}}.  \notag
	\end{eqnarray}%
	To proceed, we consider $Z$ in two cases. Indeed, if $0<q_{1}\leq 1$, then
	by replacing $q_{1}$ with $\frac{q_{1}}{q_{2}}$ in (\ref{25}) to get%
	\begin{eqnarray*}
		Z &\lesssim &\sum \limits_{\left \vert \gamma \right \vert =m-1}\left \Vert
		D^{\gamma }A\right \Vert _{\dot{\Lambda}_{\beta }\left( {\mathbb{R}}%
			^{n}\right) }^{q_{1}}\sum \limits_{k=-\infty }^{\infty }\sum
		\limits_{z=k+3}^{\infty }2^{\alpha zq_{1}}\left \Vert f_{z}\right \Vert
		_{L^{p_{1}(\cdot )}}^{q_{1}}2^{\left( z-k\right) \left( \phi +\beta +n\delta
			_{2}-\alpha \right) q_{1}} \\
		&\lesssim &\sum \limits_{\left \vert \gamma \right \vert =m-1}\left \Vert
		D^{\gamma }A\right \Vert _{\dot{\Lambda}_{\beta }\left( {\mathbb{R}}%
			^{n}\right) }^{q_{1}}\left \Vert f\right \Vert _{\dot{K}_{p_{1}(\cdot
				)}^{\alpha ,q_{1}}(\mathbb{R}^{n})}^{q_{1}}.
	\end{eqnarray*}%
	Now, let $1<q_{1}<\infty $ and $\frac{1}{q_{1}}+\frac{1}{q_{1}^{\prime }}=1$%
	. By (\ref{3}), we have%
	\begin{eqnarray*}
		Z &\lesssim &\sum \limits_{\left \vert \gamma \right \vert =m-1}\left \Vert
		D^{\gamma }A\right \Vert _{\dot{\Lambda}_{\beta }\left( {\mathbb{R}}%
			^{n}\right) }^{q_{1}}\sum \limits_{k=-\infty }^{\infty }\sum
		\limits_{z=k+3}^{\infty }2^{\alpha zq_{1}}\left \Vert f_{z}\right \Vert
		_{L^{p_{1}(\cdot )}}^{q_{1}}2^{\left( z-k\right) \left( \phi +\beta +n\delta
			_{2}-\alpha \right) \frac{q_{1}}{2}} \\
		&&\times \left( \sum \limits_{z=k+3}^{\infty }2^{\left( z-k\right) \left(
			\phi +\beta +n\delta _{2}-\alpha \right) \frac{q_{1}^{\prime }}{2}}\right) ^{%
			\frac{q_{1}}{q_{1}^{\prime }}} \\
		&\lesssim &\sum \limits_{\left \vert \gamma \right \vert =m-1}\left \Vert
		D^{\gamma }A\right \Vert _{\dot{\Lambda}_{\beta }\left( {\mathbb{R}}%
			^{n}\right) }^{q_{1}}\sum \limits_{z=-\infty }^{\infty }2^{\alpha
			zq_{1}}\left \Vert f_{z}\right \Vert _{L^{p_{1}(\cdot )}}^{q_{1}}\sum
		\limits_{k=z-3}^{\infty }2^{\left( z-k\right) \left( \phi +\beta +n\delta
			_{2}-\alpha \right) \frac{q_{1}}{2}} \\
		&\lesssim &\sum \limits_{\left \vert \gamma \right \vert =m-1}\left \Vert
		D^{\gamma }A\right \Vert _{\dot{\Lambda}_{\beta }\left( {\mathbb{R}}%
			^{n}\right) }^{q_{1}}\left \Vert f\right \Vert _{\dot{K}_{p_{1}(\cdot
				)}^{\alpha ,q_{1}}(\mathbb{R}^{n})}^{q_{1}}.
	\end{eqnarray*}%
	Thus, by introducing approximations of $X$, $Y$ and $Z$ into (\ref{101}), (%
	\ref{5*}) is attained.
	
	We are now at the point of proving (\ref{6}). If we remember 
	\begin{equation*}
		\widetilde{T}_{\left \vert \Omega \right \vert ,\phi }^{A}\left( \left \vert
		f\right \vert \right) (x)=\int \limits_{{\mathbb{R}^{n}}}\frac{\left \vert
			\Omega (x-y)\right \vert }{|x-y|^{n-\phi +m-1}}\left \vert R_{m}\left(
		A;x,y\right) \right \vert \left \vert f(y)\right \vert dy\qquad 0<\phi <n,
	\end{equation*}%
	it is easy to see that the conclusions of (\ref{5*}) also hold for $%
	\widetilde{T}_{\left \vert \Omega \right \vert ,\phi }^{A}$. Therefore, (\ref%
	{6}) is a direct consequence of Lemma 3.2 in \cite{Gurbuz} and the above
	conclusions. This completes the proof of Theorem \ref{Theorem}.
\end{proof}

Now, we prove Theorem \ref{Theorem1}.

\textbf{Proof of Theorem \ref{Theorem1}}

\begin{proof}
	Assume $f\in M\dot{K}_{p_{1}(\cdot )}^{\alpha ,q_{1}}(\mathbb{R}^{n})$, $%
	f_{z}:=f\cdot \chi _{z}$ and $f=\sum \limits_{z=-\infty }^{\infty }f_{z}$ $%
	\left( z\in 
	\mathbb{Z}
	\right) $. Then, using (\ref{25}), we get%
	\begin{eqnarray*}
		\left \Vert I_{\Omega ,\phi }^{A,m}f\right \Vert _{M\dot{K}_{p_{2}(\cdot
				)}^{\alpha ,q_{2}}(\mathbb{R}^{n})}^{q_{1}} &=&\sup_{L\in \mathbb{%
				\mathbb{Z}
		}}2^{-L\lambda q_{1}}\left( \sum \limits_{k=-\infty }^{L}2^{k\alpha
			q_{2}}\Vert \left( I_{\Omega ,\phi }^{A,m}f\right) \chi _{k}\Vert
		_{L^{p_{2}(\cdot )}}^{q_{2}}\right) ^{\frac{q_{1}}{q_{2}}} \\
		&\lesssim &\sup_{L\in \mathbb{%
				\mathbb{Z}
		}}2^{-L\lambda q_{1}}\sum \limits_{k=-\infty }^{L}2^{k\alpha q_{1}}\Vert
		\left( I_{\Omega ,\phi }^{A,m}f\right) \chi _{k}\Vert _{L^{p_{2}(\cdot
				)}}^{q_{1}} \\
		&\lesssim &\sup_{L\in \mathbb{%
				\mathbb{Z}
		}}2^{-L\lambda q_{1}}\left[ \sum \limits_{k=-\infty }^{L}2^{k\alpha
			q_{1}}\left( \sum \limits_{z=-\infty }^{k-3}\Vert \left( I_{\Omega ,\phi
		}^{A,m}f_{z}\right) \chi _{k}\Vert _{L^{p_{2}(\cdot )}}\right) ^{q_{1}}%
		\right] \\
		&&+\sup_{L\in \mathbb{%
				\mathbb{Z}
		}}2^{-L\lambda q_{1}}\left[ \sum \limits_{k=-\infty }^{L}2^{k\alpha
			q_{1}}\left( \sum \limits_{z=k-2}^{k+2}\Vert \left( I_{\Omega ,\phi
		}^{A,m}f_{z}\right) \chi _{k}\Vert _{L^{p_{2}(\cdot )}}\right) ^{q_{1}}%
		\right]
	\end{eqnarray*}%
	\begin{eqnarray}
		&&+\sup_{L\in \mathbb{%
				\mathbb{Z}
		}}2^{-L\lambda q_{1}}\left[ \sum \limits_{k=-\infty }^{L}2^{k\alpha
			q_{1}}\left( \sum \limits_{z=k+3}^{\infty }\Vert \left( I_{\Omega ,\phi
		}^{A,m}f_{z}\right) \chi _{k}\Vert _{L^{p_{2}(\cdot )}}\right) ^{q_{1}}%
		\right]  \notag \\
		&=&:X_{1}+Y_{1}+Z_{1}.  \label{102*}
	\end{eqnarray}%
	First, we estimate $X_{1}$. Similar to the estimation method of $X$ in
	Theorem \ref{Theorem}, by virtue of (\ref{37})%
	\begin{eqnarray}
		X_{1} &=&\sup_{L\in \mathbb{%
				\mathbb{Z}
		}}2^{-L\lambda q_{1}}\left[ \sum \limits_{k=-\infty }^{L}2^{k\alpha
			q_{1}}\left( \sum \limits_{z=-\infty }^{k-3}\Vert \left( I_{\Omega ,\phi
		}^{A,m}f_{z}\right) \chi _{k}\Vert _{L^{p_{2}(\cdot )}}\right) ^{q_{1}}%
		\right]  \notag \\
		&\lesssim &\sup_{L\in \mathbb{%
				\mathbb{Z}
		}}2^{-L\lambda q_{1}}\left[ \sum \limits_{k=-\infty }^{L}2^{k\alpha
			q_{1}}\left( \sum \limits_{z=-\infty }^{k-3}2^{\left( k-z\right) \left( \phi
			+\beta -n\delta _{1}+\frac{n-1}{s}\right) }\sum \limits_{\left \vert \gamma
			\right \vert =m-1}\left \Vert D^{\gamma }A\right \Vert _{\dot{\Lambda}%
			_{\beta }\left( {\mathbb{R}}^{n}\right) }\left \Vert f_{z}\right \Vert
		_{L^{p_{1}(\cdot )}}\right) ^{q_{1}}\right]  \notag \\
		&\lesssim &\sum \limits_{\left \vert \gamma \right \vert =m-1}\left \Vert
		D^{\gamma }A\right \Vert _{\dot{\Lambda}_{\beta }\left( {\mathbb{R}}%
			^{n}\right) }^{q_{1}}\sup_{L\in \mathbb{%
				\mathbb{Z}
		}}2^{-L\lambda q_{1}}\left[ \sum \limits_{k=-\infty }^{L}\left( \sum
		\limits_{z=-\infty }^{k-3}2^{\alpha z}\left \Vert f_{z}\right \Vert
		_{L^{p_{1}(\cdot )}}2^{\left( k-z\right) \left( \phi +\beta -n\delta
			_{1}+\alpha +\frac{n-1}{s}\right) }\right) ^{q_{1}}\right] .  \notag
	\end{eqnarray}%
	On the other hand, we know that following fact:%
	\begin{eqnarray}
		\left \Vert f_{z}\right \Vert _{L^{p_{1}(\cdot )}} &=&2^{-\alpha z}\left(
		2^{\alpha zq_{1}}\left \Vert f\chi _{z}\right \Vert _{L^{p_{1}(\cdot
				)}}^{q_{1}}\right) ^{\frac{1}{q_{1}}}  \notag \\
		&\leq &2^{-\alpha z}\left( \sum \limits_{i=-\infty }^{z}2^{\alpha
			iq_{1}}\left \Vert f\chi _{i}\right \Vert _{L^{p_{1}(\cdot
				)}}^{q_{1}}\right) ^{\frac{1}{q_{1}}}  \notag \\
		&=&2^{z\left( \lambda -\alpha \right) }\left[ 2^{-\lambda z}\left( \sum
		\limits_{i=-\infty }^{z}2^{\alpha iq_{1}}\left \Vert f\chi _{i}\right \Vert
		_{L^{p_{1}(\cdot )}}^{q_{1}}\right) ^{\frac{1}{q_{1}}}\right]  \notag \\
		&\lesssim &2^{z\left( \lambda -\alpha \right) }\left \Vert f\right \Vert _{M%
			\dot{K}_{p_{1}(\cdot )}^{\alpha ,q_{1}}(\mathbb{R}^{n})}^{q_{1}}.  \label{38}
	\end{eqnarray}%
	To continue calculating $X_{1}$, we consider the two cases $0<q_{1}\leq 1$
	and $1<q_{1}<\infty $, respectively.
	
	If $0<q_{1}\leq 1$ and $\alpha <$ $n\delta _{1}+\lambda -\left( \phi +\beta +%
	\frac{n-1}{s}\right) $, then using (\ref{25}) and (\ref{38}), we get%
	\begin{eqnarray*}
		X_{1} &\lesssim &\sum \limits_{\left \vert \gamma \right \vert =m-1}\left
		\Vert D^{\gamma }A\right \Vert _{\dot{\Lambda}_{\beta }\left( {\mathbb{R}}%
			^{n}\right) }^{q_{1}}\left \Vert f\right \Vert _{M\dot{K}_{p_{1}(\cdot
				)}^{\alpha ,q_{1}}(\mathbb{R}^{n})}^{q_{1}} \\
		&&\times \sup_{L\in \mathbb{%
				\mathbb{Z}
		}}2^{-L\lambda q_{1}}\left[ \sum \limits_{k=-\infty }^{L}2^{k\lambda
			q_{1}}\sum \limits_{z=-\infty }^{k-3}2^{\left( k-z\right) \left( \phi +\beta
			-n\delta _{1}-\lambda +\alpha +\frac{n-1}{s}\right) q_{1}}\right] \\
		&\lesssim &\sum \limits_{\left \vert \gamma \right \vert =m-1}\left \Vert
		D^{\gamma }A\right \Vert _{\dot{\Lambda}_{\beta }\left( {\mathbb{R}}%
			^{n}\right) }^{q_{1}}\left \Vert f\right \Vert _{M\dot{K}_{p_{1}(\cdot
				)}^{\alpha ,q_{1}}(\mathbb{R}^{n})}^{q_{1}}\sup_{L\in \mathbb{%
				\mathbb{Z}
		}}2^{-L\lambda q_{1}}\left( \sum \limits_{k=-\infty }^{L}2^{k\lambda
			q_{1}}\right) \\
		&\lesssim &\sum \limits_{\left \vert \gamma \right \vert =m-1}\left \Vert
		D^{\gamma }A\right \Vert _{\dot{\Lambda}_{\beta }\left( {\mathbb{R}}%
			^{n}\right) }^{q_{1}}\left \Vert f\right \Vert _{M\dot{K}_{p_{1}(\cdot
				)}^{\alpha ,q_{1}}(\mathbb{R}^{n})}^{q_{1}}.
	\end{eqnarray*}%
	If $1<q_{1}<\infty $ and $\alpha <$ $n\delta _{1}+\lambda -\left( \phi
	+\beta +\frac{n-1}{s}\right) $, then we use (\ref{3}), (\ref{38}) and obtain%
	\begin{equation*}
		X_{1}\lesssim \sum \limits_{\left \vert \gamma \right \vert =m-1}\left
		\Vert D^{\gamma }A\right \Vert _{\dot{\Lambda}_{\beta }\left( {\mathbb{R}}%
			^{n}\right) }^{q_{1}}\left \Vert f\right \Vert _{M\dot{K}_{p_{1}(\cdot
				)}^{\alpha ,q_{1}}(\mathbb{R}^{n})}^{q_{1}}
	\end{equation*}%
	\begin{eqnarray*}
		&&\times \sup_{L\in \mathbb{%
				\mathbb{Z}
		}}2^{-L\lambda q_{1}}\left[ 
		\begin{array}{c}
			\sum \limits_{k=-\infty }^{L}2^{k\lambda q_{1}}\sum \limits_{z=-\infty
			}^{k-3}2^{\left( k-z\right) \left( \phi +\beta -n\delta _{1}-\lambda +\alpha
				+\frac{n-1}{s}\right) \frac{q_{1}}{2}} \\ 
			\times \left( \sum \limits_{z=-\infty }^{k-3}2^{\left( k-z\right) \left(
				\phi +\beta -n\delta _{1}-\lambda +\alpha +\frac{n-1}{s}\right) \frac{%
					q_{1}^{\prime }}{2}}\right) ^{\frac{q_{1}}{q_{1}^{\prime }}}%
		\end{array}%
		\right] \\
		&\lesssim &\sum \limits_{\left \vert \gamma \right \vert =m-1}\left \Vert
		D^{\gamma }A\right \Vert _{\dot{\Lambda}_{\beta }\left( {\mathbb{R}}%
			^{n}\right) }^{q_{1}}\left \Vert f\right \Vert _{M\dot{K}_{p_{1}(\cdot
				)}^{\alpha ,q_{1}}(\mathbb{R}^{n})}^{q_{1}}\sup_{L\in \mathbb{%
				\mathbb{Z}
		}}2^{-L\lambda q_{1}}\left( \sum \limits_{k=-\infty }^{L}2^{k\lambda
			q_{1}}\right) \\
		&\lesssim &\sum \limits_{\left \vert \gamma \right \vert =m-1}\left \Vert
		D^{\gamma }A\right \Vert _{\dot{\Lambda}_{\beta }\left( {\mathbb{R}}%
			^{n}\right) }^{q_{1}}\left \Vert f\right \Vert _{M\dot{K}_{p_{1}(\cdot
				)}^{\alpha ,q_{1}}(\mathbb{R}^{n})}^{q_{1}}.
	\end{eqnarray*}
	
	Next, we estimate $Y_{1}$. If $D^{\gamma }A\in \dot{\Lambda}_{\beta }\left( {%
		\mathbb{R}}^{n}\right) $, then from the fact that $I_{\Omega ,\phi }^{A,m}$
	is bounded from $L^{p_{1}\left( \cdot \right) }$ to $L^{p_{2}\left( \cdot
		\right) }$ (see Theorem 5 in \cite{Wu0}) and (\ref{25}), it follows that%
	\begin{eqnarray*}
		Y_{1} &=&\sup_{L\in \mathbb{%
				\mathbb{Z}
		}}2^{-L\lambda q_{1}}\left[ \sum \limits_{k=-\infty }^{L}2^{k\alpha
			q_{1}}\left( \sum \limits_{z=k-2}^{k+2}\Vert \left( I_{\Omega ,\phi
		}^{A,m}f_{z}\right) \chi _{k}\Vert _{L^{p_{2}(\cdot )}}\right) ^{q_{1}}%
		\right] \\
		&\lesssim &\sum \limits_{\left \vert \gamma \right \vert =m-1}\left \Vert
		D^{\gamma }A\right \Vert _{\dot{\Lambda}_{\beta }\left( {\mathbb{R}}%
			^{n}\right) }^{q_{1}}\sup_{L\in \mathbb{%
				\mathbb{Z}
		}}2^{-L\lambda q_{1}}\left[ \sum \limits_{k=-\infty }^{L}2^{k\alpha
			q_{1}}\left( \sum \limits_{z=k-2}^{k+2}\Vert f_{z}\chi _{k}\Vert
		_{L^{p_{2}(\cdot )}}\right) ^{q_{1}}\right] \text{\  \  \  \ } \\
		&\lesssim &\sum \limits_{\left \vert \gamma \right \vert =m-1}\left \Vert
		D^{\gamma }A\right \Vert _{\dot{\Lambda}_{\beta }\left( {\mathbb{R}}%
			^{n}\right) }^{q_{1}}\sup_{L\in \mathbb{%
				\mathbb{Z}
		}}2^{-L\lambda q_{1}}\left[ \sum \limits_{k=-\infty }^{L}2^{k\alpha
			q_{1}}\Vert f\chi _{k}\Vert _{L^{p_{2}(\cdot )}}^{q_{1}}\right] \\
		&=&\sum \limits_{\left \vert \gamma \right \vert =m-1}\left \Vert D^{\gamma
		}A\right \Vert _{\dot{\Lambda}_{\beta }\left( {\mathbb{R}}^{n}\right)
		}^{q_{1}}\left \Vert f\right \Vert _{M\dot{K}_{p_{1}(\cdot )}^{\alpha
				,q_{1}}(\mathbb{R}^{n})}^{q_{1}}.
	\end{eqnarray*}%
	Now, we estimate $Z_{1}$. By (\ref{43}) and the assumption $\phi +\beta
	+n\delta _{2}<\alpha $, we know that \ 
	\begin{eqnarray*}
		Z_{1} &=&\sup_{L\in \mathbb{%
				\mathbb{Z}
		}}2^{-L\lambda q_{1}}\left[ \sum \limits_{k=-\infty }^{L}2^{k\alpha
			q_{1}}\left( \sum \limits_{z=k+3}^{\infty }\Vert \left( I_{\Omega ,\phi
		}^{A,m}f_{z}\right) \chi _{k}\Vert _{L^{p_{2}(\cdot )}}\right) ^{q_{1}}%
		\right] \\
		&\lesssim &\sum \limits_{\left \vert \gamma \right \vert =m-1}\left \Vert
		D^{\gamma }A\right \Vert _{\dot{\Lambda}_{\beta }\left( {\mathbb{R}}%
			^{n}\right) }^{q_{1}}\sup_{L\in \mathbb{%
				\mathbb{Z}
		}}2^{-L\lambda q_{1}}\left[ \sum \limits_{k=-\infty }^{L}\left( \sum
		\limits_{z=k+3}^{\infty }2^{\alpha z}\left \Vert f_{z}\right \Vert
		_{L^{p_{1}(\cdot )}}2^{\left( z-k\right) \left( \phi +\beta +n\delta
			_{2}-\alpha \right) }\right) ^{q_{1}}\right] \\
		&\lesssim &\sum \limits_{\left \vert \gamma \right \vert =m-1}\left \Vert
		D^{\gamma }A\right \Vert _{\dot{\Lambda}_{\beta }\left( {\mathbb{R}}%
			^{n}\right) }^{q_{1}}\sup_{L\in \mathbb{%
				\mathbb{Z}
		}}2^{-L\lambda q_{1}}\left[ \sum \limits_{k=-\infty }^{L}\left( \sum
		\limits_{z=k+3}^{L}2^{\alpha z}\left \Vert f_{z}\right \Vert
		_{L^{p_{1}(\cdot )}}2^{\left( z-k\right) \left( \phi +\beta +n\delta
			_{2}-\alpha \right) }\right) ^{q_{1}}\right] \\
		&&+\sum \limits_{\left \vert \gamma \right \vert =m-1}\left \Vert D^{\gamma
		}A\right \Vert _{\dot{\Lambda}_{\beta }\left( {\mathbb{R}}^{n}\right)
		}^{q_{1}}\sup_{L\in \mathbb{%
				\mathbb{Z}
		}}2^{-L\lambda q_{1}}\left[ \sum \limits_{k=-\infty }^{L}\left( \sum
		\limits_{z=L+1}^{\infty }2^{\alpha z}\left \Vert f_{z}\right \Vert
		_{L^{p_{1}(\cdot )}}2^{\left( z-k\right) \left( \phi +\beta +n\delta
			_{2}-\alpha \right) }\right) ^{q_{1}}\right] \\
		&=&:Z_{11}+Z_{12}.
	\end{eqnarray*}%
	To continue estimating $Z_{1}$, we look at two scenarios: $0<q_{1}\leq 1$
	and $1<q_{1}<\infty $.
	
	When $0<q_{1}\leq 1$ and $\phi +\beta +n\delta _{2}+\lambda <\alpha $, then
	using (\ref{25}) and (\ref{38}), we get%
	\begin{eqnarray*}
		Z_{1} &\lesssim &\sum \limits_{\left \vert \gamma \right \vert =m-1}\left
		\Vert D^{\gamma }A\right \Vert _{\dot{\Lambda}_{\beta }\left( {\mathbb{R}}%
			^{n}\right) }^{q_{1}}\left \Vert f\right \Vert _{M\dot{K}_{p_{1}(\cdot
				)}^{\alpha ,q_{1}}(\mathbb{R}^{n})}^{q_{1}}\sup_{L\in \mathbb{%
				\mathbb{Z}
		}}2^{-L\lambda q_{1}}\left[ \sum \limits_{k=-\infty }^{L}2^{k\lambda
			q_{1}}\sum \limits_{z=k+3}^{L}2^{\left( z-k\right) \left( \phi +\beta
			+n\delta _{2}+\lambda -\alpha \right) q_{1}}\right] \\
		&&+\sum \limits_{\left \vert \gamma \right \vert =m-1}\left \Vert D^{\gamma
		}A\right \Vert _{\dot{\Lambda}_{\beta }\left( {\mathbb{R}}^{n}\right)
		}^{q_{1}}\sup_{L\in \mathbb{%
				\mathbb{Z}
		}}2^{-L\lambda q_{1}}\left[ \sum \limits_{k=-\infty }^{L}\sum
		\limits_{z=L+1}^{\infty }2^{\alpha zq_{1}}\left \Vert f_{z}\right \Vert
		_{L^{p_{1}(\cdot )}}^{q_{1}}2^{\left( z-k\right) \left( \phi +\beta +n\delta
			_{2}+\lambda -\alpha \right) q_{1}}\right] \\
		&\lesssim &\sum \limits_{\left \vert \gamma \right \vert =m-1}\left \Vert
		D^{\gamma }A\right \Vert _{\dot{\Lambda}_{\beta }\left( {\mathbb{R}}%
			^{n}\right) }^{q_{1}}\left \Vert f\right \Vert _{M\dot{K}_{p_{1}(\cdot
				)}^{\alpha ,q_{1}}(\mathbb{R}^{n})}^{q_{1}}\sup_{L\in \mathbb{%
				\mathbb{Z}
		}}2^{-L\lambda q_{1}}\left( \sum \limits_{k=-\infty }^{L}2^{k\lambda
			q_{1}}\right) \\
		&&+\sum \limits_{\left \vert \gamma \right \vert =m-1}\left \Vert D^{\gamma
		}A\right \Vert _{\dot{\Lambda}_{\beta }\left( {\mathbb{R}}^{n}\right)
		}^{q_{1}}\sup_{L\in \mathbb{%
				\mathbb{Z}
		}}2^{-L\lambda q_{1}} \\
		&&\times \left[ \sum \limits_{k=-\infty }^{L}\sum \limits_{z=L+1}^{\infty
		}2^{z\lambda q_{1}}2^{\left( z-k\right) \left( \phi +\beta +n\delta
			_{2}+\lambda -\alpha \right) q_{1}}2^{-z\lambda q_{1}}\sum
		\limits_{l=-\infty }^{z}2^{\alpha lq_{1}}\left \Vert f_{l}\right \Vert
		_{L^{p_{1}(\cdot )}}^{q_{1}}\right] \\
		&\lesssim &\sum \limits_{\left \vert \gamma \right \vert =m-1}\left \Vert
		D^{\gamma }A\right \Vert _{\dot{\Lambda}_{\beta }\left( {\mathbb{R}}%
			^{n}\right) }^{q_{1}}\left \Vert f\right \Vert _{M\dot{K}_{p_{1}(\cdot
				)}^{\alpha ,q_{1}}(\mathbb{R}^{n})}^{q_{1}}.
	\end{eqnarray*}%
	If $1<q_{1}<\infty $ and $\phi +\beta +n\delta _{2}+\lambda <\alpha $, then
	we use (\ref{3}), (\ref{38}) and obtain%
	\begin{eqnarray*}
		Z_{11} &\lesssim &\sum \limits_{\left \vert \gamma \right \vert =m-1}\left
		\Vert D^{\gamma }A\right \Vert _{\dot{\Lambda}_{\beta }\left( {\mathbb{R}}%
			^{n}\right) }^{q_{1}}\left \Vert f\right \Vert _{M\dot{K}_{p_{1}(\cdot
				)}^{\alpha ,q_{1}}(\mathbb{R}^{n})}^{q_{1}} \\
		&&\times \sup_{L\in \mathbb{%
				\mathbb{Z}
		}}2^{-L\lambda q_{1}}\left[ 
		\begin{array}{c}
			\sum \limits_{k=-\infty }^{L}2^{k\lambda q_{1}}\sum
			\limits_{z=k+3}^{L}2^{\left( z-k\right) \left( \phi +\beta +n\delta
				_{2}+\lambda -\alpha \right) \frac{q_{1}}{2}} \\ 
			\times \left( \sum \limits_{z=-\infty }^{k+3}2^{\left( z-k\right) \left(
				\phi +\beta +n\delta _{2}+\lambda -\alpha \right) \frac{q_{1}^{\prime }}{2}%
			}\right) ^{\frac{q_{1}}{q_{1}^{\prime }}}%
		\end{array}%
		\right] \\
		&\lesssim &\sum \limits_{\left \vert \gamma \right \vert =m-1}\left \Vert
		D^{\gamma }A\right \Vert _{\dot{\Lambda}_{\beta }\left( {\mathbb{R}}%
			^{n}\right) }^{q_{1}}\left \Vert f\right \Vert _{M\dot{K}_{p_{1}(\cdot
				)}^{\alpha ,q_{1}}(\mathbb{R}^{n})}^{q_{1}}\sup_{L\in \mathbb{%
				\mathbb{Z}
		}}2^{-L\lambda q_{1}}\left( \sum \limits_{k=-\infty }^{L}2^{k\lambda
			q_{1}}\right) \\
		&\lesssim &\sum \limits_{\left \vert \gamma \right \vert =m-1}\left \Vert
		D^{\gamma }A\right \Vert _{\dot{\Lambda}_{\beta }\left( {\mathbb{R}}%
			^{n}\right) }^{q_{1}}\left \Vert f\right \Vert _{M\dot{K}_{p_{1}(\cdot
				)}^{\alpha ,q_{1}}(\mathbb{R}^{n})}^{q_{1}}.
	\end{eqnarray*}%
	Also, when $1<q_{1}<\infty $ and $\phi +\beta +n\delta _{2}+\lambda <\alpha $%
	, then we use (\ref{3}), (\ref{38}) and get%
	\begin{eqnarray*}
		Z_{12} &\lesssim &\sum \limits_{\left \vert \gamma \right \vert =m-1}\left
		\Vert D^{\gamma }A\right \Vert _{\dot{\Lambda}_{\beta }\left( {\mathbb{R}}%
			^{n}\right) }^{q_{1}}\sup_{L\in \mathbb{%
				\mathbb{Z}
		}}2^{-L\lambda q_{1}} \\
		&&\times \left[ \sum \limits_{k=-\infty }^{L}\left( \sum
		\limits_{z=L+1}^{\infty }2^{\alpha z}\left \Vert f_{z}\right \Vert
		_{L^{p_{1}(\cdot )}}2^{\left( z-k\right) \frac{\left( \phi +\beta +n\delta
				_{2}+\lambda -\alpha \right) }{2}}2^{\left( z-k\right) \frac{\left( \phi
				+\beta +n\delta _{2}-\lambda -\alpha \right) }{2}}\right) ^{q_{1}}\right] \\
		&\lesssim &\sum \limits_{\left \vert \gamma \right \vert =m-1}\left \Vert
		D^{\gamma }A\right \Vert _{\dot{\Lambda}_{\beta }\left( {\mathbb{R}}%
			^{n}\right) }^{q_{1}}\sup_{L\in \mathbb{%
				\mathbb{Z}
		}}2^{-L\lambda q_{1}} \\
		&&\times \left[ \sum \limits_{k=-\infty }^{L}\sum \limits_{z=L+1}^{\infty
		}2^{z\lambda q_{1}}2^{\left( z-k\right) \frac{\left( \phi +\beta +n\delta
				_{2}+\lambda -\alpha \right) }{2}q_{1}}2^{-z\lambda q_{1}}\sum
		\limits_{l=-\infty }^{z}2^{\alpha lq_{1}}\left \Vert f_{l}\right \Vert
		_{L^{p_{1}(\cdot )}}^{q_{1}}\right] \\
		&\lesssim &\sum \limits_{\left \vert \gamma \right \vert =m-1}\left \Vert
		D^{\gamma }A\right \Vert _{\dot{\Lambda}_{\beta }\left( {\mathbb{R}}%
			^{n}\right) }^{q_{1}}\left \Vert f\right \Vert _{M\dot{K}_{p_{1}(\cdot
				)}^{\alpha ,q_{1}}(\mathbb{R}^{n})}^{q_{1}}\sup_{L\in \mathbb{%
				\mathbb{Z}
		}}2^{-L\lambda q_{1}}
	\end{eqnarray*}%
	\begin{eqnarray*}
		&&\times \left[ \sum \limits_{k=-\infty }^{L}2^{k\lambda q_{1}}\sum
		\limits_{z=L+1}^{\infty }2^{\left( z-k\right) \frac{\left( \phi +\beta
				+n\delta _{2}+\lambda -\alpha \right) }{2}q_{1}}\right] \\
		&\lesssim &\sum \limits_{\left \vert \gamma \right \vert =m-1}\left \Vert
		D^{\gamma }A\right \Vert _{\dot{\Lambda}_{\beta }\left( {\mathbb{R}}%
			^{n}\right) }^{q_{1}}\left \Vert f\right \Vert _{M\dot{K}_{p_{1}(\cdot
				)}^{\alpha ,q_{1}}(\mathbb{R}^{n})}^{q_{1}}.
	\end{eqnarray*}%
	Thus, combining the estimates of $Z_{11}$ and $Z_{12}$, we find that 
	\begin{equation*}
		Z_{1}\lesssim \sum \limits_{\left \vert \gamma \right \vert =m-1}\left
		\Vert D^{\gamma }A\right \Vert _{\dot{\Lambda}_{\beta }\left( {\mathbb{R}}%
			^{n}\right) }^{q_{1}}\left \Vert f\right \Vert _{M\dot{K}_{p_{1}(\cdot
				)}^{\alpha ,q_{1}}(\mathbb{R}^{n})}^{q_{1}}.
	\end{equation*}%
	Thus, by introducing estimates of $X_{1}$, $Y_{1}$ and $Z_{1}$ into (\ref%
	{102*}), (\ref{7}) is obtained.
	
	Finally, it is simple to demonstrate that (\ref{8}). Indeed, we first know
	that%
	\begin{equation}
		\widetilde{T}_{\left \vert \Omega \right \vert ,\phi }^{A}\left( \left \vert
		f\right \vert \right) (x)\geq M_{\Omega ,\phi }^{A}f(x)  \label{103}
	\end{equation}%
	for $x\in {\mathbb{R}^{n}}$ and $0<\phi <n$ (see Lemma 3.2 in \cite{Gurbuz}%
	). Next, from the process proving (\ref{7}), the conclusions of (\ref{7})
	also hold for $\widetilde{T}_{\left \vert \Omega \right \vert ,\phi }^{A}$.
	Thus, combining this with (\ref{103}), we can immediately obtain (\ref{8}),
	which concludes the proof of Theorem \ref{Theorem1}.
\end{proof}
\begin{acknowledgement}
The author would like to express his deep gratitude to the Dr. Liwei Wang
(Anhui Polytechnic University, Wuhu, China ) for carefully reading the
manuscript and giving some valuable suggestions and important comments
during the process of this study.
\end{acknowledgement}

\end{document}